\documentclass[12pt, showkeys]{article}
\usepackage{amsmath, amssymb, ascmac}
\usepackage{bm}
\usepackage[abbrev]{amsrefs}
\usepackage{enumerate}
\usepackage{amsthm}
\usepackage{comment}
\usepackage[title]{appendix}
\usepackage{color}
\usepackage[dvipdfmx]{graphicx}
\usepackage[subrefformat=parens]{subcaption}

\usepackage{hyperref}

\pagestyle{myheadings}
\markright{A CENTRAL LIMIT THEOREM FOR THE STOCHASTIC CABLE EQUATION}

\usepackage[top=30truemm, bottom=30truemm, left=20truemm, right=20truemm]{geometry}

\DeclareMathOperator{\var}{Var}
\DeclareMathOperator{\cov}{Cov}
\DeclareMathOperator{\dom}{Dom}

\newtheorem{theorem}{\bf Theorem}
\newtheorem{lemma}{\bf Lemma}[section]

\newtheorem{remark}{\rm REMARK}[section]

\newtheorem{proposition}{\bf Proposition}[section]
\newtheorem{assumption}{\bf Assumption}

\title{{\huge A central limit theorem for \\the stochastic cable equation}}
\author{{\large Soma Nishino}}
\date{}
\begin{document}
\maketitle

\begin{abstract}
We study one-dimensional nonlinear stochastic cable equations driven by a multiplicative space-time white noise.
Using the Malliavin--Stein method, we prove a central limit theorem for the spatial average of the solution.
The convergence is established in the total variation distance with mild conditions.
We also establish a functional central limit theorem with a technical assumption.
Furthermore, we show that this assumption holds in a special case.

\bigskip
\noindent {\bf Keywords:}
Stochastic cable equation, Central limit theorem, Malliain calculus, Stein's method
\footnote[0]{2020 Mathematics Subject Classification: 
Primary 60H15; Secondary 60F05, 60H07.}
\end{abstract}


\section{Introduction and main results}
In this paper, we consider the nonlinear stochastic cable equation
\begin{equation}\label{cableeq}
    \frac{\partial u}{\partial t} = \frac{\beta}{2} \frac{\partial^2 u}{\partial x^2} - \alpha u + \sigma(u) \dot{W}
\end{equation}
on $[0,T] \times [0,L]$ for some constants $\alpha \in \mathbb{R}$ and $\beta > 0$, where $T > 0$ is fixed, $\dot{W}$ is a space-time white noise on $[0,T] \times [0,L]$, with initial condition $u_0(x) = 1$, Neumann, Dirichlet, or periodic boundary conditions.
We assume the coefficient $\sigma$ is global Lipschitz.

According to Walsh~\cite{walsh}, the above equation admits a unique mild solution, which is adapted to the filtration generated by $W$ and satisfies the condition $E[u(t,x)^2] < \infty$.
The mild solution satisfies the following equation
\begin{equation}\label{mildsol}
    u(t,x) = \int_0^L u_0(y) G_t(x,y) \ \mathrm{d} y + \int_0^t \int_0^L G_{t-s}(x,y) \sigma(u(s,y)) \ W(\mathrm{d}s, \mathrm{d}y)
\end{equation}
where in the right hand side the stochastic integral is in the sense of It\^o--Walsh, and $G$ is the Green's function for the cable equation \eqref{det_cableeq}.

We study the large $L$ asymptotics of the spatial average $F_L(t)$ of the solution, given by
\begin{equation}\label{flt}
    F_L(t) := \frac{1}{L} \int_0^L \left\{ u(t,x) - E[u(t,x)] \right\} \ \mathrm{d} x .
\end{equation}

For two random variables $X$ and $Y$, the total variation distance is defined as
\begin{equation*}
    d_{\text{TV}}(X,Y) = \sup_{B \in \mathcal{B}(\mathbb{R})} | P(X \in B) - P(Y \in B) |,
\end{equation*}
where the supremum is taken over all sets $B$ in the Borel $\sigma$-algebra $\mathcal{B}(\mathbb{R})$.
We denote by $d_{\text{TV}}(F, \mathcal{N}(0,1))$ the total variation distance between the law of a random variable $F$ and the standard normal distribution.

We are now ready to state the first main result of this paper.
\begin{theorem}
    \label{thm1}
    Suppose that $\sigma(1) \neq 0$.
    Then, for every $t > 0$ there exists a real number $c = c(t) > 0$ such that for all $L \geq 1$,
    \begin{equation}
        \label{d_conv}
        d_{\mathrm{TV}} \left( \frac{F_L(t)}{\sqrt{\var(F_L(t))}} , \mathcal{N}(0,1) \right) \leq \frac{c}{\sqrt{L}} .
    \end{equation}
\end{theorem}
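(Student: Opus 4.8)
The plan is to apply the Malliavin–Stein bound for the total variation distance. Recall that for a centered random variable $F$ in the Malliavin–Sobolev space $\mathbb{D}^{1,2}$ with $\var(F) = \sigma_F^2 > 0$, one has
\[
    d_{\mathrm{TV}}\left( \frac{F}{\sigma_F}, \mathcal{N}(0,1) \right) \leq \frac{2}{\sigma_F^2} \sqrt{\var\left( \langle DF, -DL^{-1}F \rangle_{\mathfrak{H}} \right)},
\]
where $\mathfrak{H} = L^2([0,T]\times[0,L])$, $D$ is the Malliavin derivative and $L^{-1}$ the pseudo-inverse of the Ornstein–Uhlenbeck generator. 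Applying this with $F = F_L(t)$, the theorem reduces to two estimates: a lower bound $\var(F_L(t)) \geq c_1/L$ and an upper bound $\var(\langle DF_L(t), -DL^{-1}F_L(t)\rangle_{\mathfrak{H}}) \leq c_2/L^3$.

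First I would compute the Malliavin derivative of the solution: from the mild formulation \eqref{mildsol}, $D_{r,z}u(t,x)$ satisfies the linear stochastic integral equation
\[
    D_{r,z}u(t,x) = G_{t-r}(x,z)\sigma(u(r,z)) + \int_r^t \int_0^L G_{t-s}(x,y)\, \sigma'(u(s,y))\, D_{r,z}u(s,y)\, W(\mathrm{d}s,\mathrm{d}y),
\]
so that by the Lipschitz bound on $\sigma$ and Gronwall's lemma one gets the pointwise moment estimate $E[|D_{r,z}u(t,x)|^p]^{1/p} \leq C\, G_{t-r}(x,z)$ uniformly on $[0,T]$. Integrating over $x$ gives $D_{r,z}F_L(t) = \frac{1}{L}\int_0^L D_{r,z}u(t,x)\,\mathrm{d}x$, and since $\int_0^L G_{t-r}(x,z)\,\mathrm{d}x \leq 1$ (the Green's function is sub-probabilistic for $\alpha \geq 0$; for general $\alpha$ it is bounded by $e^{|\alpha|T}$ times a probability kernel), one obtains $\|D_{\cdot,\cdot}F_L(t)\|_{L^p(\Omega; \mathfrak{H})}$ and the relevant second-moment quantities scale like $L^{-1/2}$. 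For the upper bound on $\var(\langle DF_L, -DL^{-1}F_L\rangle_\mathfrak{H})$ I would follow the now-standard route (as in Huang–Nualart–Viitasaari and Bolaños-Guerrero–Nualart–Zheng for the stochastic heat equation): express the inner product using the Malliavin derivative, use the Clark–Ocone / Poincaré-type inequality to bound the variance by a double Malliavin derivative term, and then control $E[|D_{r,z}D_{r',z'}u(t,x)|^p]$ by another Gronwall argument, reducing everything to space-integrals of products $G_{t-r}(x,z)G_{t-r'}(x',z')$ over $[0,L]^2$. The key point is that each such integral contributes one factor of $L$ in the denominator after normalization, and the triple product structure of the variance of the inner product yields the $L^{-3}$ decay, hence the $L^{-1/2}$ rate after dividing by $\sigma_F^2 \asymp L^{-1}$.

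The main obstacle is the lower bound $\var(F_L(t)) \geq c_1/L$, which is needed to make the constant $c = c(t)$ finite and is genuinely nontrivial because $\sigma(u)$ could in principle degenerate. Here I would use Assumption~\ref{assump}: writing $\var(F_L(t)) = \frac{1}{L^2}\int_0^L\int_0^L \cov(u(t,x),u(t,x'))\,\mathrm{d}x\,\mathrm{d}x'$ and expanding the covariance via the Wiener–Itô isometry applied to \eqref{mildsol}, the leading term is $\frac{1}{L^2}\int_0^t\int_0^L\int_0^L\int_0^L G_{t-s}(x,y)G_{t-s}(x',y)\,E[\sigma(u(s,y))^2]\,\mathrm{d}y\,\mathrm{d}x\,\mathrm{d}x'\,\mathrm{d}s$ plus higher-chaos terms which are nonnegative. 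For fixed $s$, $\int_0^L G_{t-s}(x,y)\,\mathrm{d}x$ is bounded below by a positive constant for $y$ in a bulk region of $[0,L]$ (away from an $O(\sqrt{t-s})$-neighborhood of the boundary), so after also applying Assumption~\ref{assump} to replace $\frac{1}{L}\int_0^L E[\sigma(u(s,y))^2]\,\mathrm{d}y$ by something converging to $f_\sigma(s) > 0$, one extracts $\var(F_L(t)) \gtrsim \frac{1}{L}\int_0^t f_\sigma(s)\,\mathrm{d}s$ for $L$ large, and the hypothesis $\var(F_L(t)) > 0$ handles the finitely many small $L$. Care is required to ensure $f_\sigma$ is not identically zero on $[0,t]$ — this follows because $u_0 \equiv 1$ forces $u$ to stay strictly away from the zero set of $\sigma$ in an $L^2$ sense for small times, or more simply one can argue that if $\var(F_L(t)) > 0$ for one $L$ then the chaos expansion forces $f_\sigma$ to be positive on a set of positive measure. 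Assembling the two bounds into the Malliavin–Stein inequality then gives \eqref{d_conv} with an explicit $c(t)$.
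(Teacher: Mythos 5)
Your overall architecture coincides with the paper's: a Malliavin--Stein total-variation bound, a variance estimate of order $L^{-3}$ for the relevant inner product, and the asymptotics $\var(F_L(t))\sim L^{-1}\int_0^t e^{-2\alpha(t-s)}f_\sigma(s)\,\mathrm{d}s$ from Proposition~\ref{asymp_cov}. The substantive difference is which Malliavin--Stein bound you invoke. You use the version with $\langle DF,-DL^{-1}F\rangle_{\mathcal H}$, which, once you bound its variance via the Poincar\'e-type inequality, forces you to control second Malliavin derivatives $D^2u$ of the solution; Gaussian-type moment bounds for $D^2u$ are not established anywhere in this paper and would have to be proved separately (by another Picard/Gronwall argument, for each of the three boundary conditions). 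The paper instead writes $F_L(t)=\delta(v_{L,t})$ via stochastic Fubini, with $v_{L,t}(s,y)=L^{-1}1_{(0,t)}(s)1_{[0,L]}(y)\sigma(u(s,y))\mathcal{I}_0(t-s,y)$ explicit, and applies Proposition~\ref{malliavin-stein1}, $d_{\mathrm{TV}}\le 2\sqrt{\var(\langle DF,v\rangle_{\mathcal H})}$; since $D_{r,z}v_{L,t}(s,y)$ only involves $\sigma'(u(s,y))D_{r,z}u(s,y)$, the whole estimate (Proposition~\ref{keyprop}) closes with the first-derivative bounds of Lemma~\ref{mdup} alone. Your route is viable --- it is the one used in parts of the follow-up literature on the stochastic heat equation --- but it is strictly heavier here, and as written it rests on a second-derivative moment bound that is a genuine missing ingredient rather than a routine citation.

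Two further remarks. On the variance lower bound you are in fact more careful than the paper: the paper cites Proposition~\ref{asymp_cov} and implicitly assumes $\int_0^t e^{-2\alpha(t-s)}f_\sigma(s)\,\mathrm{d}s>0$, whereas you correctly flag that positivity of $f_\sigma$ requires justification; your heuristic for it would need to be made precise, but identifying the issue is to your credit. One small inaccuracy: the It\^o--Walsh isometry applied to \eqref{mildsol} gives the covariance of the spatial averages \emph{exactly} as the single integral involving $E[\sigma(u(s,y))^2]$ --- there are no additional ``higher-chaos terms'' to discard, since $E[\sigma(u(s,y))^2]$ already accounts for the full law of the solution; this exact identity is precisely how Proposition~\ref{asymp_cov} is proved.
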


\begin{remark}
    Condition $\sigma(1) \neq 0$ guarantees that $\var(F_L(t)) > 0$.
    This follows from \eqref{eq_guarantees1}, \eqref{eq_guarantees2}, \eqref{eq_guarantees3}, \eqref{eq_guarantees4}, \eqref{dirichlet_rep} and \eqref{periodic_rep}.
\end{remark}

We assume the following technical condition, which is required for the proof of the functional central limit theorem.
\begin{assumption}
    \label{assump}
    There exists some nonnegative valued measurable function $f_{\sigma}$ such that for any $T > 0$, $f_{\sigma} \in L^1([0,T])$ and for all $t \in [0,T]$,
    \begin{equation}\nonumber
        \lim_{L \to \infty} \frac{1}{L} \int_0^L E[\sigma(u(t,x))^2] \ \mathrm{d} x = f_{\sigma}(t) .
    \end{equation}
\end{assumption}

We state the following functional central limit theorem.
\begin{theorem}
    \label{thm2}
    Fix $T > 0$. Suppose that Assumption \ref{assump} holds.
    Then, as $L \to \infty$,
    \begin{equation}\nonumber
        \left( \sqrt{L} F_L(t) \right)_{t \in [0,T]} \to \left( \int_0^t e^{-\alpha (t-s)} \sqrt{f_{\sigma}(s)} \ \mathrm{d} W_s \right)_{t \in [0,T]}
    \end{equation}
    where $f_{\sigma}(t)$ is the limit in Assumption~\ref{assump} and $W = \{ W_s \}_{s \in [0,T]}$ denotes a standard one-dimensional Brownian motion, and the convergence is in law on the space of continuous functions $C([0,T])$.
\end{theorem}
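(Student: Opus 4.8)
The plan is to establish the two standard ingredients of weak convergence on $C([0,T])$: tightness of $\{(\sqrt{L}\,F_L(t))_{t\in[0,T]}\}_{L\ge1}$ and convergence of its finite-dimensional distributions to those of the centered Gaussian process
\[
  \mathcal{G}(t):=\int_0^t e^{-\alpha(t-s)}\sqrt{f_\sigma(s)}\,\mathrm{d}W_s = e^{-\alpha t}\int_0^t e^{\alpha s}\sqrt{f_\sigma(s)}\,\mathrm{d}W_s ,
\]
which is well defined and has continuous paths since $f_\sigma\in L^1([0,T])$ forces $e^{\alpha\cdot}\sqrt{f_\sigma}\in L^2([0,T])$, and which has covariance $\Gamma(t,t'):=E[\mathcal{G}(t)\mathcal{G}(t')]=\int_0^{t\wedge t'}e^{-\alpha(t-s)}e^{-\alpha(t'-s)}f_\sigma(s)\,\mathrm{d}s$. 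Since the It\^o--Walsh integral in \eqref{mildsol} has mean zero, $u(t,x)-E[u(t,x)]$ coincides with that integral, and a stochastic Fubini argument gives
\[
  \sqrt{L}\,F_L(t)=\int_0^t\!\!\int_0^L \Phi^L_{t,s}(y)\,\sigma(u(s,y))\,W(\mathrm{d}s,\mathrm{d}y),\qquad \Phi^L_{t,s}(y):=\frac{1}{\sqrt{L}}\int_0^L G_{t-s}(x,y)\,\mathrm{d}x .
\]
For Neumann and periodic boundary conditions $\int_0^L G_r(x,y)\,\mathrm{d}x=e^{-\alpha r}$ for every $y$, so that $\sqrt{L}\,\Phi^L_{t,s}\equiv e^{-\alpha(t-s)}$; for the Dirichlet condition $0\le\sqrt{L}\,\Phi^L_{t,s}(y)\le e^{-\alpha(t-s)}$, with equality outside a boundary layer of width $O(\sqrt{T})$, and in all three cases $\int_0^L(\Phi^L_{t,s}(y))^2\,\mathrm{d}y\to e^{-2\alpha(t-s)}$ as $L\to\infty$. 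I will use throughout the uniform bounds $\sup_{L\ge1}\sup_{t\le T}\sup_{x\le L}E[|u(t,x)|^p]<\infty$ and $\|D_{r,z}u(s,y)\|_{L^p(\Omega)}\le C_{p,T}\,G_{s-r}(y,z)$ for every $p\ge2$, which follow from \eqref{mildsol} and from the linear equation satisfied by $D_{r,z}u$, using the linear growth of $\sigma$, the boundedness of $\sigma'$, and Gronwall's lemma.

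\emph{Tightness.} For $0\le t'<t\le T$, splitting the time integral of $u(t,\cdot)-u(t',\cdot)-E[u(t,\cdot)-u(t',\cdot)]$ at $t'$ produces a stochastic-integral representation of $\sqrt{L}(F_L(t)-F_L(t'))$ with kernels $\Phi^L_{t,s}-\Phi^L_{t',s}$ on $[0,t']$ and $\Phi^L_{t,s}$ on $[t',t]$. Burkholder's inequality together with Minkowski's inequality in $L^{p/2}(\Omega)$, the uniform bound $\|\sigma(u(s,y))\|_{L^p(\Omega)}^2\le C_{p,T}$, and the Green's-function increment estimate $\int_0^{t'}\!\int_0^L(\Phi^L_{t,s}-\Phi^L_{t',s})^2\,\mathrm{d}y\,\mathrm{d}s+\int_{t'}^t\!\int_0^L(\Phi^L_{t,s})^2\,\mathrm{d}y\,\mathrm{d}s\le C_T|t-t'|$ (which uses $|e^{-\alpha(t-s)}-e^{-\alpha(t'-s)}|\le C_T|t-t'|$, the Dirichlet boundary layer being negligible after division by $L$) yield $E[|\sqrt{L}(F_L(t)-F_L(t'))|^p]\le C_{p,T}|t-t'|^{p/2}$ for every $p\ge2$, uniformly in $L\ge1$. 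Choosing $p>2$, Kolmogorov's continuity criterion gives tightness in $C([0,T])$.

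\emph{Convergence of finite-dimensional distributions.} Fix $0\le t_1<\cdots<t_m\le T$ and write $G^k_L:=\sqrt{L}\,F_L(t_k)=\delta(v^L_k)$ with $v^L_k(s,y):=\mathbf{1}_{[0,t_k]}(s)\,\Phi^L_{t_k,s}(y)\,\sigma(u(s,y))$, an adapted (hence Skorokhod-integrable) kernel with $G^k_L\in\mathbb{D}^{1,2}$. Writing $\langle\cdot,\cdot\rangle$ for the inner product of $\mathcal{H}:=L^2([0,T]\times[0,L])$, the multivariate version of the Malliavin--Stein bound used to prove Theorem~\ref{thm1} reduces the claim to showing, for all $1\le i,j\le m$,
\[
  E\big[\langle DG^i_L,v^L_j\rangle\big]\;\longrightarrow\;\Gamma(t_i,t_j)\qquad\text{and}\qquad \var\big(\langle DG^i_L,v^L_j\rangle\big)\;\longrightarrow\;0 .
\]
From
\[
  D_{r,z}G^i_L=v^L_i(r,z)+\frac{1}{\sqrt{L}}\int_r^{t_i}\!\!\int_0^L\Big(\int_0^L G_{t_i-s}(x,y)\,\mathrm{d}x\Big)\sigma'(u(s,y))\,D_{r,z}u(s,y)\,W(\mathrm{d}s,\mathrm{d}y)
\]
one obtains $\langle DG^i_L,v^L_j\rangle=\langle v^L_i,v^L_j\rangle+R^L_{ij}$, where $R^L_{ij}$ pairs the indicated It\^o integral (supported on times $>r$) against the $\mathcal{F}_r$-measurable kernel $v^L_j(r,z)$, so that $E[R^L_{ij}]=0$; hence $E[\langle DG^i_L,v^L_j\rangle]=\int_0^{t_i\wedge t_j}\!\int_0^L\Phi^L_{t_i,s}(y)\,\Phi^L_{t_j,s}(y)\,E[\sigma(u(s,y))^2]\,\mathrm{d}y\,\mathrm{d}s$, which tends to $\Gamma(t_i,t_j)$ by Assumption~\ref{assump}, the uniform second moment bound, and dominated convergence. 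For the variances, expanding $\var(\langle DG^i_L,v^L_j\rangle)$ and bounding every resulting term via $\|D_{r,z}u(s,y)\|_{L^p}\le C\,G_{s-r}(y,z)$ leaves integrals of products of Green's functions over $[0,L]$ each carrying an overall factor $L^{-1}$; using repeatedly $\int_0^L G_r(x,y)\,\mathrm{d}x\le e^{-\alpha r}$, $\int_0^L G_r(y,z)\,\mathrm{d}z\le e^{-\alpha r}$ and $\int_0^L\!\int_0^L G_r(x,y)\,G_{r'}(x',y)\,\mathrm{d}x\,\mathrm{d}x'\le e^{-\alpha(r+r')}$, each such integral is $O(1)$, whence $\var(\langle DG^i_L,v^L_j\rangle)=O(L^{-1})\to0$. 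Therefore $(G^1_L,\dots,G^m_L)\to\mathcal{N}(0,\Gamma)$, which is the finite-dimensional law of $(\mathcal{G}(t_1),\dots,\mathcal{G}(t_m))$.

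\emph{Conclusion and main obstacle.} Tightness and finite-dimensional convergence together give $(\sqrt{L}\,F_L(t))_{t\in[0,T]}\to(\mathcal{G}(t))_{t\in[0,T]}$ in law on $C([0,T])$; since a centered Gaussian process is determined by its covariance, matching $\Gamma$ identifies the limit with $\big(\int_0^t e^{-\alpha(t-s)}\sqrt{f_\sigma(s)}\,\mathrm{d}W_s\big)_{t\in[0,T]}$, as claimed. The main obstacle is the variance estimate $\var(\langle DG^i_L,v^L_j\rangle)\to0$: it rests on the pointwise Malliavin-derivative bound $\|D_{r,z}u(s,y)\|_{L^p}\le C\,G_{s-r}(y,z)$ and on careful bookkeeping of the iterated space integrals of Green's functions, which is exactly what extracts the decisive factor $L^{-1}$ (heuristically, $u$ decorrelates on an $O(1)$ spatial scale, so the spatial average behaves like an average of $O(L)$ almost independent contributions); the only extra point for the Dirichlet condition is that the boundary layer on which $\int_0^L G_r(x,y)\,\mathrm{d}x\ne e^{-\alpha r}$ has measure $O(\sqrt{T})$ and is negligible once divided by $L$.
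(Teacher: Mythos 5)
Your proposal is correct and follows essentially the same route as the paper: tightness via a Burkholder-based increment bound $\|\sqrt{L}(F_L(t)-F_L(t'))\|_p\le C|t-t'|^{1/2}$ (the paper's Proposition~\ref{prop_momineq}), and finite-dimensional convergence via the multivariate Malliavin--Stein bound combined with $\var(\langle DF_L(t),v_{L,\tau}\rangle_{\mathcal H})=O(L^{-3})$ (the paper's Proposition~\ref{keyprop}) and the covariance asymptotics of Proposition~\ref{asymp_cov}. The only place your sketch is materially thinner than the paper is the Dirichlet time-increment estimate, which you dispatch with a boundary-layer heuristic whereas the paper proves it via Parseval on the sine expansion of $G$; the claim is nonetheless true and your overall argument is sound.
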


In recent years, considerable attention has been paid to the asymptotic behavior of spatial averages for solutions to stochastic partial differential equations.
This area of inquiry was first explored by Huang, Nualart, and Viitasaari in~\cite{huang_nualart_viitasaari}.
Utilizing the Malliavin--Stein method (see~\cite{nourdin_peccati}), they proved a central limit theorem, along with its functional counterpart, for the one-dimensional nonlinear stochastic heat equation with space-time white noise.
Following their seminal work, analogous central limit theorems for various types of stochastic partial differential equations have been established.
For example, Huang, Nualart, Viitasaari, and Zheng studied the $d$-dimensional stochastic heat equation with colored noise~\cite{huang_nualart_viitasaari_zheng}.
There has been much research on the stochastic heat equation under different settings; see, for example, \cite{assaad_nualart_tudor_viitasaari,balan_yuan,chen_khoshnevisan_nualart_pu,khoshnevisan_nualart_pu,kim_yi,nualart_xia_zheng,nualart_zheng,pu}.
Similar results are also known for the stochastic wave equation; see, for instance, \cite{delgado_nualart_zheng} for the one-dimensional case, \cite{guerrero_nualart_zheng} for the two-dimensional case, and \cite{ebina,ebina_2} for the three and higher dimensions.
For other types of SPDEs, Liu and Shen studied a central limit theorem for the solution to an SPDE related to a pseudo-differential operator that generates a stable-like process~\cite{liu_shen}.

The previous study most relevant to our setting is that of Pu~\cite{pu}.
Pu considered the one-dimensional stochastic heat equation with boundary conditions on a bounded interval $[0,L]$ and analyzed the solution using its Wiener chaos decomposition. 
In particular, we remark that Assumption \ref{assump} is necessary for the proof of the functional central limit theorem when considering SPDEs on a bounded domain, as in the present work and that of Pu~\cite{pu}.

Assumption~\ref{assump} is verified in Section~\ref{on_assump} for the case $\sigma(u) = \sigma_1 u + \sigma_0$, where $\sigma_1$ and $\sigma_0$ are constants.
The argument relies on the Wiener chaos decomposition of the solution to the stochastic cable equation~\eqref{cableeq}.
The same method was used in~\cite{pu} to establish Assumption~\ref{assump} for the stochastic heat equation with $\sigma(u) = u$.
In~\cite{huang_nualart_viitasaari} and related works, since the solution to the SPDE under consideration is spatially stationary, the limit in Assumption~\ref{assump} reduces to $E[\sigma(u(t,0))^2]$, and thus technical conditions such as those in Assumption~\ref{assump} are not required.

The nonlinear stochastic cable equation arises in the mathematical modeling of neurons, where it describes the propagation of electrical signals along their cylindrical structure.
Neurons, the fundamental components of the nervous system, operate through a sophisticated interplay of chemical, biological, and electrical phenomena.
In a common mathematical simplification, a neuron is idealized as a long, thin cylinder, similar to an electrical cable.
For such a cylinder on the interval $[0, L]$, we assume the electrical potential $u(t,x)$ depends only on position $x \in [0,L]$ and time $t$.
While this potential $u(t,x)$ is accurately governed by the Hodgkin--Huxley equations, for specific ranges of $u$, these equations can be closely approximated by the cable equation:
\begin{equation}\label{det_cableeq}
    \frac{\partial u}{\partial t} = \frac{\beta}{2} \frac{\partial^2 u}{\partial x^2} - \alpha u ,
\end{equation}
with $\beta / 2 > 0$ representing the diffusion rate within the neuron and $\alpha \in \mathbb{R}$ being the rate of ion leakage through the membrane~\cite{kallianpur_xiong}.
The neuron's surface receives current impulses through synapses.
If this incoming current at $(t,x)$ is represented by $F(t,x)$, the system obeys the inhomogeneous PDE:
\begin{equation}\nonumber
    \frac{\partial u}{\partial t} = \frac{\beta}{2} \frac{\partial^2 u}{\partial x^2} - \alpha u + F .
\end{equation}
Even in a resting state, occasional random impulses may occur, which implies that $F$ generally contains a stochastic component.
For instance, Walsh studied the case where $F$ is a compound Poisson process or a space-time white noise~\cite{walsh_2}.
The nonlinear stochastic cable equation with $F = \sigma(u) \dot{W}$ was analyzed in~\cite{walsh}.

\section{Preliminaries}
For any $Z \in L^k(\Omega)$, we define $\| Z \|_k := E[|Z|^k]^{1/k}$.

Throughout this paper, we assume that $\sigma$ is globally Lipschitz continuous with constant $K_{\sigma}$.
This implies the following linear growth condition:
\begin{equation}\nonumber
    |\sigma(y)| \leq M_{\sigma}(1 + |y|)
\end{equation}
where $M_{\sigma} = \max \{ K_{\sigma}, |\sigma(0)| \}$.

\subsection{Clark--Ocone formula}
Let $\mathcal{H}$ be the Hilbert space $L^2([0,T] \times \mathbb{R})$. 
The family of stochastic integrals
\begin{equation*}
    X(h) = \int_{[0,T] \times \mathbb{R}} h(s,x) \ W(\mathrm{d}s, \mathrm{d} x)
\end{equation*}
constitutes an isonormal Gaussian process $\{ X(h) \}_{h \in \mathcal{H}}$.
In this context, we employ the tools of Malliavin calculus (see, e.g.,~\cite{nualart}), and let $D$ denote the associated Malliavin derivative operator.
Let $\{\mathcal{F}_s\}_{s \in [0,T]}$ be the filtration generated by the space-time white noise $\dot{W}$.
A key result is the Clark--Ocone formula, which provides the following representation for any $\mathcal{F}_T$-measurable random variable $F$ in the Sobolev space $\mathbb{D}^{1,2}$:
\begin{equation*}
    F = E[F] + \int_{[0,T] \times \mathbb{R}} E[ D_{s,z} F \mid \mathcal{F}_s ] \ W(\mathrm{d}s, \mathrm{d} z) \quad \text{a.s.}
\end{equation*}
A well-known consequence of this formula, obtained via Jensen's inequality for conditional expectations, is the following Poincaré-type inequality:
\begin{equation}\label{poincare}
    \left| \operatorname{Cov}[F,G] \right| \leq \int_0^T \int_{\mathbb{R}} \| D_{s,z} F \|_2 \| D_{s,z} G \|_2 \ \mathrm{d} z \mathrm{d} s,
\end{equation}
which holds for any pair of $\mathcal{F}_T$-measurable random variables $F, G \in \mathbb{D}^{1,2}$.

\subsection{The Malliavin--Stein method}
We now recall some key results from the Malliavin--Stein method.
This method merges the Malliavin calculus with Stein's method to provide quantitative bounds on the distance between probability measures, proving particularly effective for central limit theorems~\cite{nourdin_peccati}.
\begin{proposition}
    \label{malliavin-stein1}
    Let $F = \delta(v)$ for some $v \in \dom(\delta)$, where $\dom(\delta)$ is the $L^2(\Omega)$-domain of the adjoint of the Malliavin derivative operator.
    Suppose that $F \in \mathbb{D}^{1,2}$ and $E[F^2] = 1$.
    Then, the following inequality holds:
    \begin{equation}\nonumber
        d_{\mathrm{TV}}(F, \mathcal{N}(0,1)) \leq 2 \sqrt{ \var( \langle D F, v \rangle_{\mathcal{H}} ) } .
    \end{equation}
\end{proposition}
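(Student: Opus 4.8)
The plan is to derive this estimate from the Stein characterization of the total variation distance combined with the Malliavin integration-by-parts (duality) formula, which is the standard Malliavin--Stein scheme of Nourdin and Peccati. First I would recall Stein's bound: for any integrable random variable $F$,
\[
d_{\mathrm{TV}}(F, \mathcal{N}(0,1)) \leq \sup_{f} \left| E[f'(F)] - E[F f(F)] \right| ,
\]
where the supremum runs over the solutions $f$ of the Stein equation associated with indicators of Borel sets; these satisfy $\|f\|_\infty \leq \sqrt{\pi/2}$ and $\|f'\|_\infty \leq 2$. Thus it suffices to estimate $E[f'(F)] - E[F f(F)]$ uniformly over all such $f$, for which the only property we will use is $\|f'\|_\infty \leq 2$.

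The key step is to rewrite $E[F f(F)]$ using $F = \delta(v)$. Since $F \in \mathbb{D}^{1,2}$ and $f$ is Lipschitz, $f(F) \in \mathbb{D}^{1,2}$ and the chain rule $D(f(F)) = f'(F)\, DF$ holds, so the duality between $\delta$ and $D$ gives
\[
E[F f(F)] = E[\delta(v) f(F)] = E\!\left[ \langle D(f(F)), v \rangle_{\mathcal{H}} \right] = E\!\left[ f'(F)\, \langle DF, v \rangle_{\mathcal{H}} \right] .
\]
Consequently,
\[
E[f'(F)] - E[F f(F)] = E\!\left[ f'(F)\big( 1 - \langle DF, v \rangle_{\mathcal{H}} \big) \right] .
\]

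It then remains to bound the right-hand side. Using $\|f'\|_\infty \leq 2$,
\[
\left| E[f'(F)] - E[F f(F)] \right| \leq 2\, E\!\left[ \left| 1 - \langle DF, v \rangle_{\mathcal{H}} \right| \right] .
\]
Observe that $E[\langle DF, v \rangle_{\mathcal{H}}] = E[F \delta(v)] = E[F^2] = 1$ by hypothesis, so $1 - \langle DF, v \rangle_{\mathcal{H}}$ is centered, and by the Cauchy--Schwarz (equivalently Jensen) inequality,
\[
E\!\left[ \left| 1 - \langle DF, v \rangle_{\mathcal{H}} \right| \right] \leq \sqrt{ E\!\left[ \big( 1 - \langle DF, v \rangle_{\mathcal{H}} \big)^2 \right] } = \sqrt{ \var\!\big( \langle DF, v \rangle_{\mathcal{H}} \big) } .
\]
Combining the last three displays with Stein's bound yields $d_{\mathrm{TV}}(F, \mathcal{N}(0,1)) \leq 2 \sqrt{ \var( \langle DF, v \rangle_{\mathcal{H}} ) }$, as claimed.

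The only delicate point is the justification of the integration-by-parts step: one must verify that $f(F) \in \mathbb{D}^{1,2}$ and that the chain rule is valid for the merely Lipschitz (rather than $C^1$) Stein solutions $f$, as well as the integrability needed to write $E[\langle DF,v\rangle_{\mathcal{H}}] = E[F\delta(v)]$; all of this is standard and is handled by approximating $f$ by smooth functions with uniformly bounded first derivatives and passing to the limit. Since this proposition is a known ingredient from the Malliavin--Stein literature, I expect the write-up to consist of a reference together with the short derivation above; the substantive work of the paper lies in applying it to $F = F_L(t)/\sqrt{\var(F_L(t))}$.
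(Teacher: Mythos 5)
Your argument is correct and is precisely the standard Malliavin--Stein derivation that the paper itself relies on by citing Nourdin--Peccati: the paper states this proposition as a preliminary without proof, and your combination of the Stein solution bound $\|f'\|_\infty \leq 2$, the duality $E[F f(F)] = E[f'(F)\langle DF, v\rangle_{\mathcal{H}}]$, the identity $E[\langle DF, v\rangle_{\mathcal{H}}] = E[F^2] = 1$, and Cauchy--Schwarz reproduces exactly the intended proof with the correct constant. The caveat you flag about justifying the chain rule for Lipschitz Stein solutions is the right one and is handled in the cited reference.
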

The proof of the functional CLT in Theorem~\ref{thm2} requires a multivariate version of Proposition~\ref{malliavin-stein1}, which we state below.

\begin{proposition}
    \label{malliavin-stein2}
    Fix an integer $m \geq 2$.
    Let $F = (F_1, \dots, F_m)$ be a random vector where each component has the form $F_i = \delta(v_i)$ for some $v_i \in \dom(\delta)$ and $F_i \in \mathbb{D}^{1,2}$.
    Let $Z$ denote an $m$-dimensional centered Gaussian random vector with a given covariance matrix $(C_{i,j})_{1 \leq i,j \leq m}$.
    Then, for any function $h \in C^2(\mathbb{R}^m)$ with bounded second partial derivatives, we have
    \begin{equation}\nonumber
        \left| E[h(F)] - E[h(Z)] \right| \leq \frac{1}{2} \| h'' \|_{\infty} \sqrt{ \sum_{i,j=1}^m E\left[ \left| C_{i,j} - \langle D F_i, v_j \rangle_{\mathcal{H}} \right|^2 \right] } ,
    \end{equation}
    where
    \begin{equation}\nonumber
        \| h'' \|_{\infty} := \max_{1 \leq i,j \leq m} \sup_{x \in \mathbb{R}^m} \left| \frac{ \partial^2 h(x) }{ \partial x_i \partial x_j } \right| .
    \end{equation}
\end{proposition}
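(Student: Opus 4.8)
The plan is to use the \emph{smart-path} (Gaussian interpolation) argument, which is the classical route to a bound of exactly this shape (cf.~\cite{nourdin_peccati}) and which builds the estimate directly out of the two integration-by-parts tools already available: Gaussian integration by parts for $Z$ and Malliavin integration by parts for $F$, the latter through the representation $F_i=\delta(v_i)$. First I would enlarge the probability space so as to carry a centered Gaussian vector $Z$ with covariance $(C_{i,j})_{1\le i,j\le m}$ that is independent of the isonormal process $\{X(h)\}_{h\in\mathcal H}$, keeping the Malliavin calculus with respect to the $X$-part only, so that $D_{s,z}Z=0$. For $t\in[0,1]$ set $W_t:=\sqrt{t}\,F+\sqrt{1-t}\,Z$ and $\Psi(t):=E[h(W_t)]$. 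Since $h\in C^2(\mathbb R^m)$ with bounded second derivatives, $h$ has at most quadratic growth and $W_t\in L^2$, so $\Psi$ is well defined and continuous on $[0,1]$, with $\Psi(1)=E[h(F)]$, $\Psi(0)=E[h(Z)]$, whence $E[h(F)]-E[h(Z)]=\int_0^1\Psi'(t)\,\mathrm dt$.

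The core of the proof is the identity
\[
  \Psi'(t)=\frac{1}{2}\sum_{i,j=1}^{m} E\!\left[\frac{\partial^{2} h(W_t)}{\partial x_i\,\partial x_j}\Bigl(\langle DF_i,v_j\rangle_{\mathcal H}-C_{i,j}\Bigr)\right],\qquad t\in(0,1).
\]
To obtain it I would first differentiate under the expectation, getting $\Psi'(t)=\sum_i E\bigl[\partial_i h(W_t)\bigl(\tfrac{F_i}{2\sqrt t}-\tfrac{Z_i}{2\sqrt{1-t}}\bigr)\bigr]$, and then treat the two families of terms separately. For the terms containing $Z_i$, condition on the $X$-part and apply the Gaussian integration-by-parts formula together with $\partial_{z_k}[\partial_i h(W_t)]=\sqrt{1-t}\,\partial_i\partial_k h(W_t)$; this gives $E[\partial_i h(W_t)Z_i]=\sqrt{1-t}\sum_j C_{i,j}\,E[\partial_i\partial_j h(W_t)]$. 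For the terms containing $F_i$, condition on $Z$, note that $\partial_i h$ is Lipschitz (its derivatives $\partial_i\partial_j h$ being bounded) so that $\partial_i h(W_t)\in\mathbb D^{1,2}$ with $D\bigl(\partial_i h(W_t)\bigr)=\sqrt t\sum_j\partial_i\partial_j h(W_t)\,DF_j$ by the chain rule, and use $F_i=\delta(v_i)$ together with the duality relation $E[G\,\delta(v_i)]=E[\langle DG,v_i\rangle_{\mathcal H}]$ with $G=\partial_i h(W_t)$; this gives $E[\partial_i h(W_t)F_i]=\sqrt t\sum_j E[\partial_i\partial_j h(W_t)\langle DF_j,v_i\rangle_{\mathcal H}]$. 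The prefactors $\sqrt t$ and $\sqrt{1-t}$ cancel, and after a harmless relabeling of the indices $i$ and $j$ (using the symmetry of $\partial_i\partial_j h$ and of $C_{i,j}$) the displayed identity follows.

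It then remains to integrate over $t\in[0,1]$, bring the absolute value inside, bound $|\partial_i\partial_j h(W_t)|$ by $\|h''\|_\infty$, and apply the Cauchy--Schwarz inequality (first $E|Y|\le(EY^2)^{1/2}$, then a Cauchy--Schwarz step over the pairs of indices) to reach the asserted estimate. The step I expect to require the most care is the analytic justification underlying the identity for $\Psi'$: namely (i) that $\partial_i h(W_t)\in\mathbb D^{1,2}$ with the stated Malliavin derivative, which rests on the chain rule for Lipschitz functions on the enlarged space, and (ii) the interchange of $\tfrac{\mathrm d}{\mathrm dt}$ with $E$ and the integrability in $t$ near the endpoints, where the weights $1/\sqrt t$ and $1/\sqrt{1-t}$ are singular; the standard remedy is to establish the formula first on a compact subinterval $[\varepsilon,1-\varepsilon]$ (all quantities lying in $L^1$ there because $F_i,Z_i\in L^2$ and $\partial_i h$ has at most linear growth), observe that after the two integrations by parts the apparent singularities become removable, and then let $\varepsilon\downarrow 0$ using the continuity of $\Psi$ on $[0,1]$. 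Everything else is bookkeeping.
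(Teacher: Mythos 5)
The paper offers no proof of this proposition at all --- it is quoted as a known preliminary, citing \cite{nourdin_peccati}, and is (up to the constant discussed below) Proposition~2.3 of \cite{huang_nualart_viitassari} --- so the only benchmark is the standard argument, and your smart-path interpolation is exactly that argument. All the substantive steps are right: the independent Gaussian enlargement so that $DZ=0$; the Gaussian integration by parts giving $E[\partial_i h(W_t)Z_i]=\sqrt{1-t}\sum_j C_{i,j}E[\partial_i\partial_j h(W_t)]$; the duality $E[\partial_i h(W_t)\delta(v_i)]=E[\langle D(\partial_i h(W_t)),v_i\rangle_{\mathcal H}]$, legitimate because $\partial_i h\in C^1$ with bounded derivative so the ordinary chain rule already gives $\partial_i h(W_t)\in\mathbb{D}^{1,2}$ (no need for the Lipschitz version); the cancellation of $\sqrt{t}$ and $\sqrt{1-t}$; and the relabelling by symmetry of the Hessian and of $C$.

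The one place your write-up does not deliver what is claimed is the last line. From
\begin{equation}\nonumber
\left|E[h(F)]-E[h(Z)]\right|\le\frac12\,\|h''\|_{\infty}\sum_{i,j=1}^{m}E\left|C_{i,j}-\langle DF_i,v_j\rangle_{\mathcal H}\right| ,
\end{equation}
your two Cauchy--Schwarz steps produce $\frac{m}{2}\|h''\|_{\infty}\bigl(\sum_{i,j}E[|\cdot|^2]\bigr)^{1/2}$, not $\frac12(\cdots)$: the Cauchy--Schwarz over the $m^2$ index pairs costs a factor $\sqrt{m^2}=m$. With $\|h''\|_{\infty}$ defined as the entrywise maximum of the Hessian this factor cannot be removed --- take $h(x)=\frac12(x_1+\dots+x_m)^2$, $F$ Gaussian with covariance $A=I+\varepsilon J$ ($J$ the all-ones matrix) and $C=I$, so that every $C_{i,j}-\langle DF_i,v_j\rangle_{\mathcal H}=-\varepsilon$; then the left side is $\frac12 m^2\varepsilon$ while the stated right side is $\frac12 m\varepsilon$. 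Indeed the version in \cite{huang_nualart_viitassari} carries the constant $m/2$, and the $1/2$ here appears to be a transcription slip. So your proof is a correct proof of the correct ($m/2$) statement --- harmless for the application, where $m$ is fixed and only the decay in $L$ matters --- but you should have flagged that your final estimate yields $m/2$ rather than the $1/2$ you set out to prove.
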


\subsection{Moments and Malliavin derivative of the solution}
Recently, Pu proved that the moments of $u(t,x)$ are uniformly bounded for all $L \geq 1$ in the case where $\alpha = 0$ and $\beta = 1$~\cite{pu}*{Lemma 2.3.}.
Similarly, a corresponding result has been obtained for the case $\alpha \in \mathbb{R}$ and $\beta > 0$.

\begin{lemma}
    Fix $T > 0$.
    Then for all $k \geq 2$, there exists $c_{T,k} > 0$ such that
    \begin{equation}\label{momentup}
        \sup_{L \geq 1} \sup_{(t,x) \in [0,T] \times [0,L]} \| u(t,x) \|_k \leq c_{T,k} < \infty .
    \end{equation}
\end{lemma}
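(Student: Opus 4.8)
The plan is to prove the bound first for the Picard iterates and then pass to the limit. Recall that the Picard scheme is $u_0(t,x):=\int_0^L u_0(y)G_t(x,y)\,\mathrm dy$ and, for $n\ge 0$,
\[
u_{n+1}(t,x) := u_0(t,x) + \int_0^t\int_0^L G_{t-s}(x,y)\,\sigma(u_n(s,y))\,W(\mathrm ds,\mathrm dy),
\]
and that $u_n(t,x)\to u(t,x)$ in $L^k(\Omega)$, uniformly on $[0,T]\times[0,L]$. Set $H_n(t):=\sup_{0\le s\le t}\sup_{x\in[0,L]}\|u_n(s,x)\|_k^2$. Applying the Burkholder--Davis--Gundy inequality for Walsh integrals, then Minkowski's integral inequality (valid since $k/2\ge 1$), and the linear growth $|\sigma(z)|\le C_\sigma(1+|z|)$, I would obtain a constant $c>0$ depending only on $k$ and $C_\sigma$ such that
\[
\|u_{n+1}(t,x)\|_k^2 \le 2\Big(\int_0^L G_t(x,y)\,\mathrm dy\Big)^2 + c\int_0^t\int_0^L G_{t-s}(x,y)^2\big(1+\|u_n(s,y)\|_k^2\big)\,\mathrm dy\,\mathrm ds .
\]
Bounding the last integrand by $(1+H_n(s))\sup_x\int_0^L G_{t-s}(x,y)^2\,\mathrm dy$ reduces everything to a uniform-in-$L$ control of the $L^2$-mass of the Green's function.

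The key step is precisely this kernel estimate. Write $G_t(x,y)=e^{-\alpha t}p_t(x,y)$, where $p_t$ is the heat kernel of $\tfrac{\beta}{2}\partial_x^2$ on $[0,L]$ with the prescribed (Neumann, Dirichlet, or periodic) boundary condition, and let $q_r(z):=(2\pi\beta r)^{-1/2}e^{-z^2/(2\beta r)}$ be the whole-line kernel. By the Chapman--Kolmogorov identity and the method of images, $\int_0^L G_{t-s}(x,y)^2\,\mathrm dy = e^{-2\alpha(t-s)}p_{2(t-s)}(x,x)$, and in the Neumann (resp.\ periodic) case $p_{2r}(x,x)$ is a sum of translates of $q_{2r}$ over a lattice of spacing $\ge 2L$ (resp.\ $L$); in the Dirichlet case one has the pointwise domination $|p^{D}_{t}(x,y)|\le p^{N}_{t}(x,y)$, so the Neumann estimate already suffices. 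Comparing the lattice sum with an integral via the unimodality of $q_{2r}$ yields $\sum_{n\in\mathbb Z}q_{2r}(a+2nL)\le (4\pi\beta r)^{-1/2}+\tfrac1{2L}$ uniformly in $a\in\mathbb R$, whence, using $L\ge 1$,
\[
\sup_{x\in[0,L]}\int_0^L G_{t-s}(x,y)^2\,\mathrm dy \le e^{2|\alpha|T}\Big(\frac{1}{\sqrt{\pi\beta\,(t-s)}}+1\Big) =: g(t-s),
\]
with $g\in L^1([0,T])$ and $\|g\|_{L^1([0,T])}$ independent of $L$; the same computation gives $\int_0^L G_t(x,y)\,\mathrm dy\le e^{|\alpha|T}$.

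Combining the two displays, $H_{n+1}(t)\le A + B\int_0^t g(t-s)\,H_n(s)\,\mathrm ds$ for all $n\ge 0$ and all $L\ge 1$, with constants $A,B$ depending only on $T,k,\alpha,\beta,C_\sigma$, and with $H_0(t)\le e^{2|\alpha|T}$. Since $g\in L^1([0,T])$ has only a weak ($r^{-1/2}$) singularity, iterating this convolution inequality (a standard fractional-Gronwall/Picard argument: the $j$-fold convolution $g^{*j}$ becomes bounded once $j$ exceeds a threshold fixed by the exponent $1/2$, and $\sum_{j}B^j\|g^{*j}\ast\mathbf 1\|_\infty<\infty$) gives $\sup_{n\ge 0}\sup_{t\in[0,T]}H_n(t)\le c_{T,k}<\infty$, uniformly in $L\ge 1$. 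Finally, since $u_n(t,x)\to u(t,x)$ in $L^k(\Omega)$, Fatou's lemma along an a.s.\ convergent subsequence gives $\|u(t,x)\|_k^k\le\liminf_n \|u_n(t,x)\|_k^k\le c_{T,k}^{k/2}$ for every $(t,x)\in[0,T]\times[0,L]$ and every $L\ge 1$, which is \eqref{momentup}.

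I expect the only genuine obstacle to be the middle step: producing the $L$-independent $L^1([0,T])$ bound on $g$ simultaneously for the three boundary conditions, which rests on the image representation of $p_t$ and the elementary theta-type estimate $\sum_{n\in\mathbb Z}e^{-n^2L^2/(c r)}\le 1 + C\sqrt r/L$. Once that is in place, both the singular-kernel Gronwall iteration and the passage to the limit are routine.
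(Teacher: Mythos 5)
Your proof is correct and follows essentially the same route as the source: the paper itself gives no proof of this lemma, deferring to Pu's Lemma 2.3, whose argument is exactly the Picard iteration plus Burkholder estimate plus singular-kernel Gronwall scheme you describe, with the cable-equation case reduced to the heat-kernel case via $G_t(x,y)=e^{-\alpha t}g_{\beta t}(x,y)$. Your uniform-in-$L$ bound $\sup_x\int_0^L G_{t-s}(x,y)^2\,\mathrm{d}y\lesssim e^{2|\alpha|T}\bigl((t-s)^{-1/2}+1\bigr)$ via the image representation is precisely the key kernel estimate needed, so the proposal is complete.
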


Pu also proved that the moments of the Malliavin derivative of $u(t,x)$ satisfy a Gaussian-type upper bound uniformly over $L \geq 1$ in the case where $\alpha = 0$ and $\beta = 1$~\cite{pu}*{Lemma 2.4.}.
This result can similarly be extended to the case $\alpha \in \mathbb{R}$ and $\beta > 0$.

\begin{lemma}
    \label{mdup}
    Fix $T > 0$.
    Then, for all $k \geq 2$, there exists $C_{T,k} > 0$ such that for all $L \geq 1$, $(t,x) \in [0,T] \times [0,L]$, and for almost every $(s,y) \in (0,t) \times \mathbb{R}$,
    \begin{equation}\nonumber
        \| D_{s,y}u(t,x) \|_k \leq
        \begin{cases}
            C_{T,k} 1_{[0,L]}(y) p_{\beta(t-s)}(x-y) & \mathrm{(Neumann/Dirichlet ~ case)} \\
            C_{T,k} 1_{[0,L]}(y) G_{t-s}(x,y) & \mathrm{(periodic ~ case)}
        \end{cases}
    \end{equation}
    where $p_t(z)$ denotes the Gaussian heat kernel, defined in \eqref{gaussian_heat_kernel}.
    In particular, we have $u(t,x) \in \bigcap_{k \geq 2} \mathbb{D}^{1,k}$ for all $(t,x) \in [0,T] \times [0,L]$.
\end{lemma}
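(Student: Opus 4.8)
The plan is to differentiate the mild formulation \eqref{mildsol}, obtain a linear stochastic integral equation for the Malliavin derivative, and then close a Picard/Gronwall estimate whose constants are controlled by heat-kernel bounds that are uniform in $L\ge1$. Applying $D_{s,y}$ to \eqref{mildsol}, using the chain rule (legitimate since $\sigma\in C^1$, hence $|\sigma'|\le\mathrm{Lip}(\sigma)$) and the standard rule for differentiating an It\^o--Walsh integral, one gets, for $y\in[0,L]$ and $s<t$,
\begin{equation}\nonumber
 D_{s,y}u(t,x)=G_{t-s}(x,y)\,\sigma(u(s,y))+\int_s^t\!\int_0^L G_{t-r}(x,z)\,\sigma'(u(r,z))\,D_{s,y}u(r,z)\ W(\mathrm dr,\mathrm dz),
\end{equation}
and $D_{s,y}u(t,x)=0$ for $y\notin[0,L]$, which accounts for the factor $1_{[0,L]}(y)$. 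To make this rigorous --- and to obtain $u(t,x)\in\mathbb D^{1,k}$ in the first place --- one argues on the Picard approximations $u_n$, showing by induction that each $u_n(t,x)\in\mathbb D^{1,k}$ with an estimate of the claimed form holding uniformly in $n$ (and in $L$), and then passing to the limit; this is routine once the a priori bound below is in place.

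Taking $\|\cdot\|_k$-norms, applying the Burkholder--Davis--Gundy inequality together with Minkowski's integral inequality to the stochastic integral, the bound $|\sigma'|\le\mathrm{Lip}(\sigma)$, and the uniform moment estimate \eqref{momentup} together with the linear growth of $\sigma$ to the first term, one obtains
\begin{equation}\nonumber
 \|D_{s,y}u(t,x)\|_k^2\le c_1\,G_{t-s}(x,y)^2+c_2\int_s^t\!\int_0^L G_{t-r}(x,z)^2\,\|D_{s,y}u(r,z)\|_k^2\ \mathrm dz\,\mathrm dr,
\end{equation}
where $c_1,c_2$ depend only on $k$, $T$, $\mathrm{Lip}(\sigma)$ and the linear-growth constant of $\sigma$ --- crucially not on $L$.

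Next I would record the $L$-uniform heat-kernel estimates on $[0,T]$. Writing $G_t=e^{-\alpha t}\tilde G_t$, where $\tilde G$ is the heat kernel of $\tfrac{\beta}{2}\partial_x^2$ on $[0,L]$ under the given boundary condition, one has $0\le G_t^{\mathrm{D}}\le G_t^{\mathrm{N}}=e^{-\alpha t}\tilde G_t^{\mathrm{N}}$, and the method of images gives $\tilde G_t^{\mathrm{N}}(x,y)=\sum_{n\in\mathbb Z}\bigl[p_{\beta t}(x-y+2nL)+p_{\beta t}(x+y+2nL)\bigr]$. Since $(x+y)^2-(x-y)^2=4xy\ge0$ and likewise for the boundary-reflection terms, while the remaining image terms are geometrically small and dominated by $p_{\beta t}(x-y)$ uniformly for $t\le T$ and $L\ge1$ (here $L\ge1$ and $t\le T$ are exactly what make the geometric remainder sums bounded), one gets $\tilde G_t^{\mathrm{N}}(x,y)\le C_{\beta,T}\,p_{\beta t}(x-y)$ for $x,y\in[0,L]$; the periodic kernel satisfies the analogous $L$-uniform estimates (the Chapman--Kolmogorov semigroup property, $\|G_a^{\mathrm{per}}\|_\infty\le C a^{-1/2}$ for $a\le T$, and the corresponding convolution inequality) exactly as in~\cite{pu}. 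Substituting $G_{t-r}^{\mathrm{N}/\mathrm{D}}(x,z)\le C\,p_{\beta(t-r)}(x-z)$ into the integral inequality, extending the $z$-integral to $\mathbb R$, and iterating, the explicit identity $\int_{\mathbb R}p_{\beta a}(x-z)^2p_{\beta b}(z-y)^2\,\mathrm dz=\tfrac{\sqrt{a+b}}{2\sqrt{\pi\beta ab}}\,p_{\beta(a+b)}(x-y)^2$ together with $\int_s^t[(t-r)(r-s)]^{-1/2}\mathrm dr=\pi$ yields a convergent series (of Mittag--Leffler type, with terms carrying factors $\Gamma(\tfrac m2+\tfrac12)^{-1}$) whose sum is bounded on $[0,T]$ by a constant independent of $L$; this gives $\|D_{s,y}u(t,x)\|_k\le C_{T,k}p_{\beta(t-s)}(x-y)$ in the Neumann/Dirichlet case, and the same scheme with $G^{\mathrm{per}}$ in place of $p_{\beta(\cdot)}$ gives the periodic case. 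Finally, $u(t,x)\in\bigcap_{k\ge2}\mathbb D^{1,k}$ follows from the finiteness of these norms.

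I expect the main obstacle to be maintaining \emph{$L$-uniform} control at every step: the heat-kernel comparison $\tilde G_t^{\mathrm{N}}(x,y)\le C_{\beta,T}p_{\beta t}(x-y)$ and, above all, its periodic counterpart --- where one cannot simply pass to the whole line and must instead work with the convolution estimates for $G^{\mathrm{per}}$ on $[0,L]$ --- and then verifying that every constant entering the iteration (from Burkholder--Davis--Gundy, from \eqref{momentup}, and from the convolution identities) is genuinely independent of $L\ge1$. The differentiability of the Picard iterates and the limiting procedure in $\mathbb D^{1,k}$ are standard, but must also be carried out with $L$-free bounds.
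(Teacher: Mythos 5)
Your proposal is correct and follows essentially the same route as the paper, which gives no proof of its own but defers to Pu's Lemma 2.4 (proved by exactly this scheme: differentiating the mild form on Picard iterates, a BDG/Minkowski integral inequality, and an iteration closed by the $L$-uniform kernel comparisons $G_t\le K_T p_{\beta t}$ of Lemma~\ref{a1}(4) and the semigroup/sup-norm estimates for the periodic kernel). The extension to general $\alpha\in\mathbb{R}$, $\beta>0$ is absorbed by the factorization \eqref{Gg}, as your constants reflect.
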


\section{Asymptotic behavior of the covariance}
In this section, we analyze the asymptotic behavior of the covariance of the spatial integral of the solution to \eqref{cableeq}.

Since \eqref{Gg}, by the same arguments as~\cite{pu}*{Lemma 3.2. and Lemma A.4.}, we obtain the following supporting lemma.
\begin{lemma}
    \label{I0lem}
    Fix $T > 0$.
    Denote for $(t,x) \in (0,T] \times [0,L]$,
    \begin{equation}\nonumber
        \mathcal{I}_0(t,x) = \int_0^L G_t(x,y) \ \mathrm{d} y , \text{~and} \quad \mathcal{I}_0(0,x) = 1 .
    \end{equation}
    Then
    \begin{equation}\nonumber
        \sup_{L \geq 1} \sup_{(t,x) \in [0,T] \times [0,L]} \mathcal{I}_0(t,x) \leq e^{|\alpha| T} ,
    \end{equation}
    and for all $t > 0$,
    \begin{equation}\nonumber
        \lim_{L \to \infty} \frac{1}{L} \int_0^L \mathcal{I}_0(t,x) \ \mathrm{d} x = e^{-\alpha t} ,
    \end{equation}
    and for all $t_1 , t_2 > 0$,
    \begin{equation}\nonumber
        \lim_{L \to \infty} \frac{1}{L} \int_0^L \mathcal{I}_0(t_1,x) \mathcal{I}_0(t_2, x) \ \mathrm{d} x = e^{-\alpha(t_1 + t_2)} .
    \end{equation}
\end{lemma}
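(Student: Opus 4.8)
The plan is to reduce all three assertions to elementary properties of the total mass of the $[0,L]$-heat kernel. Recalling the relation \eqref{Gg} between the Green's function of the cable equation and that of the pure heat equation, I would write $G_t(x,y) = e^{-\alpha t} g_t(x,y)$, where $g_t(x,y)$ denotes the kernel of $\exp(\tfrac{\beta}{2}t\,\partial_x^2)$ on $[0,L]$ under the same (Neumann, Dirichlet or periodic) boundary condition, so that $\mathcal{I}_0(t,x) = e^{-\alpha t}\varphi_t(x)$ with $\varphi_t(x) := \int_0^L g_t(x,y)\,\mathrm{d}y$. The first observation is that $0 \le \varphi_t(x) \le 1$ in every case: $\varphi_t(x)$ is the mass surviving at time $t$ of a Brownian motion run at speed $\beta$ from $x$ and killed on exiting $(0,L)$ (Dirichlet case), while $\varphi_t \equiv 1$ by conservation of mass (Neumann and periodic cases). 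The uniform bound then follows at once: $\mathcal{I}_0(t,x) = e^{-\alpha t}\varphi_t(x) \le e^{|\alpha|t} \le e^{|\alpha|T}$ for $t>0$, and $\mathcal{I}_0(0,x) = 1 \le e^{|\alpha|T}$, all uniformly in $L \ge 1$, $t \in [0,T]$ and $x \in [0,L]$.

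For the first limit I would argue case by case. In the Neumann and periodic cases $\varphi_t \equiv 1$, so $\frac1L\int_0^L \mathcal{I}_0(t,x)\,\mathrm{d}x = e^{-\alpha t}$ already for every finite $L$ and nothing more is needed. In the Dirichlet case, $1-\varphi_t(x)$ equals the probability that a speed-$\beta$ Brownian motion started at $x$ has left $(0,L)$ before time $t$; a method-of-images/reflection estimate (the step to be carried out exactly as in \cite{pu}*{Lemma A.4.}) gives a Gaussian bound of the form $1 - \varphi_t(x) \le C\big(e^{-x^2/(2\beta t)} + e^{-(L-x)^2/(2\beta t)}\big)$, and therefore $\frac1L\int_0^L\big(1-\varphi_t(x)\big)\,\mathrm{d}x \le C\sqrt{\beta t}/L \to 0$ as $L \to \infty$. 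Combined with $\mathcal{I}_0(t,x) = e^{-\alpha t}\varphi_t(x)$, this yields $\frac1L\int_0^L \mathcal{I}_0(t,x)\,\mathrm{d}x \to e^{-\alpha t}$.

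For the product limit I would write $\mathcal{I}_0(t_1,x)\mathcal{I}_0(t_2,x) = e^{-\alpha(t_1+t_2)}\varphi_{t_1}(x)\varphi_{t_2}(x)$ and use the elementary inequality $0 \le 1 - ab \le (1-a) + (1-b)$, valid for $a,b \in [0,1]$, with $a = \varphi_{t_1}(x)$ and $b = \varphi_{t_2}(x)$, which gives
\[ 0 \le \frac1L\int_0^L \big(1-\varphi_{t_1}(x)\varphi_{t_2}(x)\big)\,\mathrm{d}x \le \frac1L\int_0^L\big(1-\varphi_{t_1}(x)\big)\,\mathrm{d}x + \frac1L\int_0^L\big(1-\varphi_{t_2}(x)\big)\,\mathrm{d}x \to 0 \]
by the previous step; hence $\frac1L\int_0^L\varphi_{t_1}(x)\varphi_{t_2}(x)\,\mathrm{d}x \to 1$ and so $\frac1L\int_0^L \mathcal{I}_0(t_1,x)\mathcal{I}_0(t_2,x)\,\mathrm{d}x \to e^{-\alpha(t_1+t_2)}$.

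The main (and essentially only) obstacle I anticipate is the Dirichlet boundary-layer estimate, i.e. proving that the mass deficit $1-\varphi_t(x)$ of the killed heat kernel decays in a Gaussian fashion away from the two endpoints so that its spatial average is $O(1/L)$; in the Neumann and periodic cases the argument requires nothing beyond conservation of mass and Fubini, and once the Dirichlet estimate is in place (following \cite{pu}), the remainder is bookkeeping.
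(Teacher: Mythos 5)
Your proof is correct and follows essentially the same route as the paper, which simply factors out $e^{-\alpha t}$ via \eqref{Gg} and defers to the corresponding heat-kernel statements in \cite{pu}*{Lemma 3.2 and Lemma A.4} (uniform mass bound, conservation of mass in the Neumann/periodic cases, and the Gaussian boundary-layer estimate for the Dirichlet mass deficit). Your probabilistic reading of $1-\varphi_t(x)$ as an exit probability, together with the inequality $1-ab\le(1-a)+(1-b)$ for the product limit, is a clean self-contained way of supplying exactly those details.
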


The following lemmas are prepared for the proof of Proposition \ref{asymp_cov_low}.
\begin{lemma}
    \label{lem_neumann_periodic}
    In the case of Neumann or periodic boundary conditions,
    \begin{equation}\nonumber
        \lim_{t \to 0} \sup_{L \geq 1} \sup_{x \in [0,L]} | E[\sigma(u(t,x))^2] - \sigma(1)^2 | = 0 .
    \end{equation}
\end{lemma}

\begin{proof}
    Fix $T > 0$.
    For every $(t,x) \in [0,T] \times [0,L]$,
    \begin{equation}\nonumber
        \begin{aligned}
            &| E[\sigma(u(t,x))^2] - \sigma(1)^2 | \\
            &= | E[ ( \sigma(u(t,x)) - \sigma(1) ) ( \sigma(u(t,x)) + \sigma(1) ) ] | \\
            &\leq \sqrt{ E[ | \sigma(u(t,x)) - \sigma(1) |^2 ] } \sqrt{ E[ | \sigma(u(t,x)) + \sigma(1) |^2 ] } \\
            &\leq \sqrt{E[ K_{\sigma}^2 | u(t,x) - 1 |^2  ]} \sqrt{ E[  2\sigma(u(t,x))^2 + 2\sigma(1)^2  ] } \\
            &\leq K_{\sigma} \sqrt{ E[ | u(t,x) - 1 |^2 ] } \sqrt{ 4 M_{\sigma}^2 (c_{T,2}^2 + 1) + 2 \sigma(1)^2 } ,
        \end{aligned}
    \end{equation}
    where $K_{\sigma}$ is the Lipschitz constant of $\sigma$, and $M_{\sigma}$ is the constant in the linear growth condition.
    From \eqref{mildsol} and It\^o's isometry, we get
    \begin{equation}\label{mildsol_ineq}
        \begin{aligned}
            &E[ | u(t,x) - 1 |^2 ] \\
            &\leq 2 \left| \int_0^L G_t(x,y) \ \mathrm{d} y - 1 \right|^2 + 2 \int_0^t \int_0^L G_{t-s}(x,y)^2 E[ \sigma^2(u(s,y)) ] \ \mathrm{d} s \mathrm{d} y \\
            &\leq 2 \left| \int_0^L G_t(x,y) \ \mathrm{d} y - 1 \right|^2 + 4 M_{\sigma}^2 (1 + c_{T,2}^2) \int_0^t \int_0^L G_{t-s}(x,y)^2 \ \mathrm{d} s \mathrm{d} y .
        \end{aligned}
    \end{equation}
    Since we assume Neumann or periodic boundary conditions, using \eqref{gid} and semigroup property of $G$, we have
    \begin{equation}\nonumber
        \begin{aligned}
            &E[ | u(t,x) - 1 |^2 ] \\
            &\leq 2 |e^{-\alpha t} - 1|^2 + 4 M_{\sigma}^2 (1 + c_{T,2}^2) \int_0^t G_{2(t-s)}(x,x) \ \mathrm{d} s 
        \end{aligned}
    \end{equation}
    In the case of Neumann boundary conditions, using \eqref{const_kt}, we get
    \begin{equation}\nonumber
        \begin{aligned}
            \int_0^t G_{2(t-s)}(x,x) \ \mathrm{d} s &\leq K_{2T} e^{2|\alpha|T}  \int_0^t \frac{1}{\sqrt{4 \pi \beta(t-s)}} \ \mathrm{d} s \\
            &= K_{2T} e^{2|\alpha|T} \sqrt{\frac{t}{\pi \beta}} .
        \end{aligned}
    \end{equation}
    Then, we have
    \begin{equation}\nonumber
        \begin{aligned}
            &E[ | u(t,x) - 1 |^2 ] \\
            &\leq 2 |e^{-\alpha t} - 1|^2 + 4 M_{\sigma}^2 (1 + c_{T,2}^2) K_{2T} e^{2|\alpha|T} \sqrt{\frac{t}{\pi \beta}}
        \end{aligned}
    \end{equation}
    In the case of periodic boundary conditions, using \eqref{periodic_rep} and the following identity~\cite{pu}*{A.17}
    \begin{equation}\nonumber
        \sum_{j \in \mathbb{Z}} p_{r}(j) = \sum_{n \in \mathbb{Z}} e^{-2r \pi^2 n^2} \quad \text{for}~\text{all}~ r > 0 ,
    \end{equation}
    we have
    \begin{equation}\nonumber
        \begin{aligned}
            &\int_0^t G_{2(t-s)}(x,x) \ \mathrm{d} s \\
            &= \int_0^t e^{-2\alpha (t-s)} \sum_{j \in \mathbb{Z}} p_{2\beta (t-s)}(jL) \ \mathrm{d} s \\
            &\leq \int_0^t e^{-2\alpha (t-s)} \sum_{j \in \mathbb{Z}} p_{2\beta (t-s)}(j) \ \mathrm{d} s \\
            &= \int_0^t e^{-2\alpha (t-s)} \sum_{n \in \mathbb{Z}} e^{-4\beta (t-s) \pi^2 n^2}\ \mathrm{d} s \\
            &\leq e^{2|\alpha|T} \sum_{n \in \mathbb{Z}} \int_0^t e^{-4\beta (t-s) \pi^2 n^2}\ \mathrm{d} s \\
            &= e^{2|\alpha|T} \sum_{n \in \mathbb{Z}} \frac{1 - e^{-4\beta t \pi^2 n^2}}{ 4\beta \pi^2 n^2 } .
        \end{aligned}
    \end{equation}
    Then,
    \begin{equation}\nonumber
        \begin{aligned}
            &E[ | u(t,x) - 1 |^2 ] \\
            &\leq 2 |e^{-\alpha t} - 1|^2 + 4 M_{\sigma}^2 (1 + c_{T,2}^2) e^{2|\alpha|T} \sum_{n \in \mathbb{Z}} \frac{1 - e^{-4\beta t \pi^2 n^2}}{ 4\beta \pi^2 n^2 } .
        \end{aligned}
    \end{equation}
    Applying the Lebesgue dominated convergence theorem, we get
    \begin{equation}\nonumber
        \lim_{t \to 0} \sum_{n \in \mathbb{Z}} \frac{1 - e^{-4\beta t \pi^2 n^2}}{ 4\beta \pi^2 n^2 } = \sum_{n \in \mathbb{Z}} \lim_{t \to 0} \frac{1 - e^{-4\beta t \pi^2 n^2}}{ 4\beta \pi^2 n^2 } = 0 .
    \end{equation}
    Therefore, in the case of Neumann or periodic boundary conditions,
    \begin{equation}\nonumber
        \lim_{t \to 0} \sup_{L \geq 1} \sup_{x \in [0,L]} | E[\sigma(u(t,x))^2] - E[\sigma(1)^2] | = 0 .
    \end{equation}
\end{proof}

\begin{lemma}
    \label{lem_dirichlet}
    In the case of Dirichlet boundary conditions,
    \begin{equation}\nonumber
        \lim_{t \to 0} \sup_{L \geq 1} \left| \frac{1}{L} \int_0^L E[\sigma(u(t,x))^2] \ \mathrm{d} x - \sigma(1)^2 \right| = 0 .
    \end{equation}
\end{lemma}

\begin{proof}
    Fix $T > 0$.
    For every $(t,x) \in [0,T] \times [0,L]$,
    \begin{equation}\nonumber
        \begin{aligned}
            &\left| \frac{1}{L} \int_0^L E[\sigma(u(t,x))^2] \ \mathrm{d} x - \sigma(1)^2 \right| \\
            &= \left| \frac{1}{L} \int_0^L E[ ( \sigma(u(t,x)) - \sigma(1) ) ( \sigma(u(t,x)) + \sigma(1) ) ] \ \mathrm{d} x \right| \\
            &\leq \sqrt{ \frac{1}{L} \int_0^L E[ | \sigma(u(t,x)) - \sigma(1) |^2 ] \ \mathrm{d} x } \sqrt{ \frac{1}{L} \int_0^L E[ | \sigma(u(t,x)) + \sigma(1) |^2 ] \ \mathrm{d} x } \\
            &\leq \sqrt{ \frac{1}{L} \int_0^L E[ K_{\sigma}^2 | u(t,x) - 1 |^2 ] \ \mathrm{d} x } \sqrt{ \frac{1}{L} \int_0^L E[  2\sigma(u(t,x))^2 + 2\sigma(1)^2 ] \ \mathrm{d} x } \\
            &\leq K_{\sigma} \sqrt{ \frac{1}{L} \int_0^L E[ | u(t,x) - 1 |^2 ] \ \mathrm{d} x } \sqrt{ 4 M_{\sigma}^2 (c_{T,2}^2 + 1) + 2 \sigma(1)^2 } .
        \end{aligned}
    \end{equation}
    From \eqref{mildsol} and It\^o's isometry, we get \eqref{mildsol_ineq}.
    Since we assume Dirichlet boundary conditions, using semigroup property of $G$ and \eqref{const_kt}, we have
    \begin{equation}\nonumber
        \begin{aligned}
            &E[ | u(t,x) - 1 |^2 ] \\
            &\leq 2 \left| \int_0^L G_t(x,y) \ \mathrm{d} y - 1 \right|^2 + 4 M_{\sigma}^2 (1 + c_{T,2}^2) \int_0^t G_{2(t-s)}(x,x) \ \mathrm{d} s \\
            &\leq 2 \left| \int_0^L G_t(x,y) \ \mathrm{d} y - 1 \right|^2 + 4 M_{\sigma}^2 (1 + c_{T,2}^2) K_{2T} e^{2|\alpha|T} \int_0^t \frac{1}{\sqrt{4 \pi \beta(t-s)}} \ \mathrm{d} s \\
            &\leq 2 \left| \int_0^L G_t(x,y) \ \mathrm{d} y - 1 \right|^2 + 4 M_{\sigma}^2 (1 + c_{T,2}^2) K_{2T} e^{2|\alpha|T} \sqrt{\frac{t}{\pi \beta}} .
        \end{aligned}
    \end{equation}
    By \eqref{dirichlet_rep}, we get
    \begin{equation}\nonumber
        \begin{aligned}
            &\int_0^L G_t(x,y) \ \mathrm{d} y \\
            &= 2 e^{-\alpha t} \sum_{n=1}^{\infty} \sin \left( \frac{n \pi x}{L} \right) \frac{1 - (-1)^n}{n \pi } e^{-\frac{n^2 \pi^2 \beta t}{2 L^2}} .
        \end{aligned}
    \end{equation}
    Then, applying the $L^2([0,L])$-orthogonality of the functions $\{ x \mapsto \sin(n \pi x / L) \}_{n=1}^{\infty}$,
    \begin{equation}\label{int_gt_eq}
        \begin{aligned}
            &\frac{1}{L} \int_0^L \left| \int_0^L G_t(x,y) \ \mathrm{d} y - 1 \right|^2 \ \mathrm{d} x \\
            &= 4 e^{-2\alpha t} \sum_{n=1}^{\infty} \frac{1}{L} \int_0^L \sin^2 \left( \frac{n \pi x}{L} \right) \ \mathrm{d} x \left( \frac{1 - (-1)^n}{n \pi} \right)^2 e^{-\frac{n^2 \pi^2 \beta t}{L^2}} \\
            &\quad + 1 - 4 e^{-\alpha t} \sum_{n=1}^{\infty} \frac{1}{L} \int_0^L \sin \left( \frac{n \pi x}{L} \right) \ \mathrm{d} x  \frac{1 - (-1)^n}{n \pi} e^{-\frac{n^2 \pi^2 \beta t}{2 L^2}} \\
            &= 2 e^{-2\alpha t} \sum_{n=1}^{\infty} \left( \frac{1 - (-1)^n}{n \pi} \right)^2 e^{-\frac{n^2 \pi^2 \beta t}{L^2}} + 1 - 4 e^{-\alpha t} \sum_{n=1}^{\infty} \left( \frac{1 - (-1)^n}{n \pi} \right)^2 e^{-\frac{n^2 \pi^2 \beta t}{2 L^2}} .
        \end{aligned}
    \end{equation}
    Now, for every $L \geq 1$,
    \begin{equation}\nonumber
        \begin{aligned}
            &\left| \sum_{n=1}^{\infty} \left( \frac{1 - (-1)^n}{n \pi} \right)^2 e^{-\frac{n^2 \pi^2 \beta t}{L^2}} - \sum_{n=1}^{\infty} \left( \frac{1 - (-1)^n}{n \pi} \right)^2 \right| \\
            &= \left| \sum_{n=1}^{\infty} \left( \frac{1 - (-1)^n}{n \pi} \right)^2 ( e^{-\frac{n^2 \pi^2 \beta t}{L^2}} - 1 ) \right| \\
            &\leq \sum_{n=1}^{\infty} \left( \frac{1 - (-1)^n}{n \pi} \right)^2 | e^{-\frac{n^2 \pi^2 \beta t}{L^2}} - 1 | \\
            &\leq \sum_{n=1}^{\infty} \left( \frac{1 - (-1)^n}{n \pi} \right)^2 | e^{- n^2 \pi^2 \beta t} - 1 | .
        \end{aligned}
    \end{equation}
    Applying the dominated convergence theorem,
    \begin{equation}\nonumber
        \lim_{t \to 0} \sum_{n=1}^{\infty} \left( \frac{1 - (-1)^n}{n \pi} \right)^2 | e^{- n^2 \pi^2 \beta t} - 1 | = 0.
    \end{equation}
    Combining the above with the following identity
    \begin{equation}\nonumber
        \sum_{n=1}^{\infty} \left( \frac{1 - (-1)^n}{n \pi} \right)^2 = \sum_{k=1}^{\infty} \frac{4}{(2k - 1)^2 \pi^2} = \frac{1}{2} 
    \end{equation}
    yields
    \begin{equation}\nonumber
        \lim_{t \to 0} \sup_{L \geq 1} \left| \sum_{n=1}^{\infty} \left( \frac{1 - (-1)^n}{n \pi} \right)^2 e^{-\frac{n^2 \pi^2 \beta t}{L^2}} - \frac{1}{2} \right| = 0 .
    \end{equation}
    Similarly, we get
    \begin{equation}\nonumber
        \lim_{t \to 0} \sup_{L \geq 1} \left| \sum_{n=1}^{\infty} \left( \frac{1 - (-1)^n}{n \pi} \right)^2 e^{-\frac{n^2 \pi^2 \beta t}{2L^2}} - \frac{1}{2} \right| = 0 .
    \end{equation}
    These together with \eqref{int_gt_eq} imply 
    \begin{equation}\nonumber
        \lim_{t \to 0} \sup_{L \geq 1} \frac{1}{L} \int_0^L \left| \int_0^L G_t(x,y) \ \mathrm{d} y - 1 \right|^2 \ \mathrm{d} x = 0 .
    \end{equation}
    Therefore, we have
    \begin{equation}\nonumber
        \lim_{t \to 0} \sup_{L \geq 1} \frac{1}{L} \int_0^L E[ | u(t,x) - 1 |^2 ] \ \mathrm{d} x = 0 .
    \end{equation}
    This completes the proof.

\end{proof}

The following results provide the asymptotic behavior of the covariance function of the renormalized sequence of processes $F_L(t)$ as $L$ tends to infinity.
\begin{proposition}
    \label{asymp_cov_low}
    There exists $\delta > 0$, for every $t_1, t_2 > 0$,
    \begin{equation}\label{eq_asymp_cov_low}
        \begin{aligned}
            &\liminf_{L \to \infty} \cov\left[ \sqrt{L} F_L(t_1), \sqrt{L} F_L(t_2) \right] \geq \frac{\sigma(1)^2}{2} \int_0^{t_1 \wedge t_2 \wedge \delta} e^{-\alpha(t_1 + t_2 - 2s)} \ \mathrm{d} s .
        \end{aligned}
    \end{equation}
\end{proposition}

\begin{proof}
    Using the mild form in \eqref{mildsol} and It\^o's isometry, we have
    \begin{equation}\label{eq_guarantees1}
        \begin{aligned}
            &\cov\left[ \sqrt{L} F_L(t_1), \sqrt{L} F_L(t_2) \right] \\
            &= \frac{1}{L} \int_{[0,L]^2} \cov \left[ u(t_1,x) , u(t_2,y) \right] \ \mathrm{d} x \mathrm{d} y \\
            &= \frac{1}{L} \int_{[0,L]^2} \int_0^{t_1 \wedge t_2} \int_0^L G_{t_1 - s}(x,z) G_{t_2 - s}(y,z) E[\sigma(u(s,z))^2] \ \mathrm{d} z \mathrm{d} s \mathrm{d} x \mathrm{d} y \\
            &= \frac{1}{L} \int_0^{t_1 \wedge t_2} \int_0^L \mathcal{I}_0(t_1 - s, z) \mathcal{I}_0(t_2 - s, z) E[\sigma(u(s,z))^2] \ \mathrm{d} z \mathrm{d} s ,
        \end{aligned}
    \end{equation}
    where $\mathcal{I}_0$ is defined in Lemma~\ref{I0lem}.
    Moreover, we obtain
    \begin{equation}\nonumber
        \begin{aligned}
            &\cov\left[ \sqrt{L} F_L(t_1), \sqrt{L} F_L(t_2) \right] \\
            &= \frac{1}{L} \int_0^{t_1 \wedge t_2} \int_0^L \left[ \mathcal{I}_0(t_1 - s, z) \mathcal{I}_0(t_2 - s, z) - e^{-\alpha(t_1 + t_2 - 2s)} \right] E[\sigma(u(s,z))^2] \ \mathrm{d} z \mathrm{d} s \\
            &\quad + \int_0^{t_1 \wedge t_2} e^{-\alpha(t_1 + t_2 - 2s)} \frac{1}{L} \int_0^L E[\sigma(u(s,z))^2] \ \mathrm{d} z \mathrm{d} s .
        \end{aligned}
    \end{equation}
    By \eqref{momentup},
    \begin{equation}\nonumber
        \begin{aligned}
            &\left| \frac{1}{L} \int_0^{t_1 \wedge t_2} \int_0^L \left[ \mathcal{I}_0(t_1 - s, z) \mathcal{I}_0(t_2 - s, z) - e^{-\alpha(t_1 + t_2 - 2s)} \right] E[\sigma(u(s,z))^2] \ \mathrm{d} z \mathrm{d} s \right| \\
            &\leq 2 M_{\sigma}^2 (1 + c_{T,2}^2) \int_0^{t_1 \wedge t_2} \left| \frac{1}{L} \int_0^L \mathcal{I}_0(t_1 - s, z) \mathcal{I}_0(t_2 - s, z) \ \mathrm{d} z - e^{-\alpha(t_1 + t_2 - 2s)}  \right| \ \mathrm{d} s ,
        \end{aligned}
    \end{equation}
    where $M_{\sigma} > 0$ is the constant in the linear growth condition for $\sigma$.
    Hence, applying Lemma~\ref{I0lem} and dominated convergence theorem, we obtain
    \begin{equation}\nonumber
        \begin{aligned}
            &\frac{1}{L} \int_0^{t_1 \wedge t_2} \int_0^L \left[ \mathcal{I}_0(t_1 - s, z) \mathcal{I}_0(t_2 - s, z) - e^{-\alpha(t_1 + t_2 - 2s)} \right] E[\sigma(u(s,z))^2] \ \mathrm{d} z \mathrm{d} s \\
            &\to 0 \mathrm{~as~} L \to \infty .
        \end{aligned}
    \end{equation}
    Then, we have
    \begin{equation}\label{eq_liminf}
        \begin{aligned}
            &\liminf_{L \to \infty} \cov\left[ \sqrt{L} F_L(t_1), \sqrt{L} F_L(t_2) \right] \\
            &= \liminf_{L \to \infty} \int_0^{t_1 \wedge t_2} e^{-\alpha(t_1 + t_2 - 2s)} \frac{1}{L} \int_0^L E[\sigma(u(s,z))^2] \ \mathrm{d} z \mathrm{d} s \\ .
        \end{aligned}
    \end{equation}
    In the case of Neumann or periodic boundary conditions, from Lemma \ref{lem_neumann_periodic}, there exists $\delta > 0$, for every $L \geq 1$ and $(s,z) \in [0, \delta] \times [0,L]$,
    \begin{equation}\label{eq_guarantees2}
        E[\sigma(u(s,z))^2] \geq \frac{\sigma(1)^2}{2} .
    \end{equation}
    In the case of Dirichlet boundary conditions, from Lemma \ref{lem_dirichlet}, there exists $\delta > 0$, for every $L \geq 1$ and $s \in [0, \delta]$,
    \begin{equation}\label{eq_guarantees3}
        \frac{1}{L} \int_0^L E[\sigma(u(s,z))^2] \ \mathrm{d} z \geq \frac{\sigma(1)^2}{2} .
    \end{equation}
    These together with \eqref{eq_liminf} implies \eqref{eq_asymp_cov_low}.
\end{proof}

\begin{proposition}
    \label{asymp_cov}
    Suppose that Assumption \ref{assump} holds.
    For every $t_1, t_2 > 0$,
    \begin{equation}\label{asymp_cov_eq}
        \lim_{L \to \infty} \cov\left[ \sqrt{L} F_L(t_1), \sqrt{L} F_L(t_2) \right] = \int_0^{t_1 \wedge t_2} e^{-\alpha(t_1 + t_2 - 2s)} f_{\sigma}(s) \ \mathrm{d} s ,
    \end{equation}
    where the function $f_{\sigma}$ is defined in Assumption~\ref{assump}.
\end{proposition}

\begin{proof}
    By the same argument as in the proof of Proposition \ref{asymp_cov_low}, for every $t_1, t_2 > 0$,
    \begin{equation}\nonumber
        \begin{aligned}
            &\lim_{L \to \infty} \cov\left[ \sqrt{L} F_L(t_1), \sqrt{L} F_L(t_2) \right] \\
            &= \lim_{L \to \infty} \int_0^{t_1 \wedge t_2} e^{-\alpha(t_1 + t_2 - 2s)} \frac{1}{L} \int_0^L E[\sigma(u(s,z))^2] \ \mathrm{d} z \mathrm{d} s 
        \end{aligned}
    \end{equation}
    This together with Assumption~\ref{assump} implies \eqref{asymp_cov_eq}.
\end{proof}

\section{Proof of Theorem~\ref{thm1}}
Using stochastic Fubini's theorem, we have
\begin{equation}\label{flt2}
    F_L(t) = \int_0^t \int_0^L v_{L,t}(s,y) \ W(\mathrm{d}s, \mathrm{d} y) = \delta(v_{L,t}) \mathrm{~a.s.} ,
\end{equation}
where
\begin{equation}\label{vlt}
    \begin{aligned}
        &v_{L,t}(s,y) \\
        &:= \frac{1}{L} 1_{(0,t)}(s) 1_{[0,L]}(y) \sigma(u(s,y)) \int_0^L G_{t-s}(x,y) \ \mathrm{d} x \\
        &= \frac{1}{L} 1_{(0,t)}(s) 1_{[0,L]}(y) \sigma(u(s,y)) \mathcal{I}_0(t-s,y) ,
    \end{aligned}
\end{equation}
and $\delta$ is the adjoint of the Malliavin derivative operator.

We prepare the following proposition for the proof of our main results.
\begin{proposition}
    \label{keyprop}
    For every $T > 0$, there exists $A_T > 0$ such that
    \begin{equation}\nonumber
        \sup_{t,\tau \in [0,T]} \var \left( \left\langle_{} D F_L(t), v_{L,\tau} \right\rangle_{\mathcal{H}} \right) \leq \frac{A_T}{L^3} \text{~for~all~} L \geq 1.
    \end{equation}
\end{proposition}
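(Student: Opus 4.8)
The plan is to follow the Malliavin--Stein scheme of~\cite{huang_nualart_viitassari} and~\cite{pu}, taking care that everything is expressed through the \emph{first} Malliavin derivative of $u$, since $\sigma$ is only assumed to be $C^1$. Write $Z := \langle D F_L(t), v_{L,\tau} \rangle_{\mathcal H}$. First I would expand $D F_L(t)$: differentiating~\eqref{flt2} (equivalently, integrating over $x \in [0,L]$ the evolution equation for $D_{r,z} u(t,x)$ and using~\eqref{vlt}) gives, for a.e.\ $(r,z)$,
\begin{equation}\nonumber
    D_{r,z} F_L(t) = v_{L,t}(r,z) + \int_r^t \int_0^L D_{r,z} v_{L,t}(s,y) \ W(\mathrm{d}s, \mathrm{d}y) ,
\end{equation}
with $D_{r,z} v_{L,t}(s,y) = \frac{1}{L} 1_{(0,t)}(s) 1_{[0,L]}(y) \sigma'(u(s,y)) D_{r,z} u(s,y) \mathcal I_0(t-s,y)$; for each fixed $(r,z)$ this integrand is adapted in $(s,y)$ and vanishes unless $s > r$, so the stochastic integral is an It\^o integral. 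Pairing with $v_{L,\tau}$ in $\mathcal H$ and applying the stochastic Fubini theorem then splits $Z = Z_1 + Z_2$, where
\begin{equation}\nonumber
    Z_1 := \langle v_{L,t}, v_{L,\tau} \rangle_{\mathcal H} = \frac{1}{L^2} \int_0^{t \wedge \tau} \int_0^L \sigma(u(r,z))^2 \mathcal I_0(t-r,z) \mathcal I_0(\tau-r,z) \ \mathrm{d}z \mathrm{d}r ,
\end{equation}
and $Z_2 := \int_0^t \int_0^L g(s,y) \ W(\mathrm{d}s, \mathrm{d}y)$ with $g(s,y) := \int_0^{s \wedge \tau} \int_0^L v_{L,\tau}(r,z) D_{r,z} v_{L,t}(s,y) \ \mathrm{d}z \mathrm{d}r$; the latter is again adapted in $(s,y)$ because $v_{L,\tau}(r,z)$ is $\mathcal F_r \subseteq \mathcal F_s$-measurable on the range $r < s$ of the inner integral. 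It then suffices to bound $\var(Z_1)$ and $\var(Z_2)$ separately by a constant multiple of $L^{-3}$, uniformly in $t, \tau \in [0,T]$.

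For $Z_1$ I would apply the Poincar\'e-type inequality~\eqref{poincare} with $F = G = Z_1$. One has $Z_1 \in \mathbb D^{1,2}$ (by Lemma~\ref{mdup} and $\sigma \in C^1$) and $D_{q,w} Z_1 = \frac{2}{L^2} \int_0^{t \wedge \tau} \int_0^L \sigma(u(r,z)) \sigma'(u(r,z)) D_{q,w} u(r,z) \mathcal I_0(t-r,z) \mathcal I_0(\tau-r,z) \ \mathrm{d}z \mathrm{d}r$, which involves only $D u$. Using Minkowski's and H\"older's inequalities, the linear growth of $\sigma$ together with~\eqref{momentup}, the boundedness of $\sigma'$, the bound $\mathcal I_0 \le e^{|\alpha| T}$ from Lemma~\ref{I0lem}, the Gaussian bound of Lemma~\ref{mdup}, and $\int_{\mathbb R} p_{\beta(r-q)}(z-w) \ \mathrm{d}z \le 1$ (respectively $\int_0^L G_{r-q}(z,w) \ \mathrm{d}z = \mathcal I_0(r-q,w) \le e^{|\alpha| T}$ in the periodic case, by symmetry of $G$), one obtains $\| D_{q,w} Z_1 \|_2 \le C_T L^{-2} 1_{[0,L]}(w)$. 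Substituting into~\eqref{poincare} and integrating $\mathrm{d}w$ over $[0,L]$ produces the extra power of $L$, so that $\var(Z_1) \le C_T' L^{-3}$.

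For $Z_2$ the crucial point is that it is a \emph{mean-zero It\^o integral}, so by It\^o's isometry $\var(Z_2) = E[Z_2^2] = \int_0^t \int_0^L E[g(s,y)^2] \ \mathrm{d}y \mathrm{d}s$; in particular no second Malliavin derivative of $u$ is ever needed, which is what keeps the argument inside the class $\sigma \in C^1$. Bounding $g(s,y)$ by Minkowski's and H\"older's inequalities, using $|v_{L,\tau}(r,z)| \le \frac{1}{L} M_\sigma (1 + |u(r,z)|) e^{|\alpha| T}$, the moment bounds~\eqref{momentup}, Lemma~\ref{mdup} for $\| D_{r,z} u(s,y) \|_4$, and $\int_0^{s \wedge \tau} \int_0^L p_{\beta(s-r)}(y-z) \ \mathrm{d}z \mathrm{d}r \le s \le T$ (respectively with $G$ in place of $p$ in the periodic case), one gets $\| g(s,y) \|_2 \le C_T L^{-2} 1_{(0,t)}(s) 1_{[0,L]}(y)$. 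Hence $\var(Z_2) \le C_T^2 L^{-4} \cdot (t L) \le C_T^2 T L^{-3}$, and combining the two estimates via $\var(Z) \le 2 \var(Z_1) + 2 \var(Z_2)$ yields the claim with $A_T$ depending only on $T$, $\alpha$, $\beta$, and the Lipschitz and linear-growth constants of $\sigma$.

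I expect the main obstacle to be precisely this handling of $Z_2$: a crude bound $\var(Z_2) \le E[Z_2^2]$ based only on pointwise estimates of the integrand of $Z$ loses a factor of $L$ (it gives only $L^{-2}$), while a direct Poincar\'e estimate of $\var(Z_2)$ would require controlling $D^2 u$, which is unavailable for merely $C^1$ coefficients. Recognising, after stochastic Fubini, that $Z_2$ is a genuine adapted It\^o integral --- so that its variance coincides with its second moment, and the two factors $L^{-1}$ coming from $v_{L,\tau}$ and from $D v_{L,t}$ combine with the $\mathrm{d}y$-integration over $[0,L]$ to exactly the order $L^{-3}$ --- is the crux of the proof. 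The remaining points, namely verifying the adaptedness and integrability required for stochastic Fubini and checking the periodic case via Lemma~\ref{I0lem}, are routine.
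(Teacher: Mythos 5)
Your proposal is correct and follows essentially the same route as the paper: the same splitting of $\langle DF_L(t), v_{L,\tau}\rangle_{\mathcal H}$ via stochastic Fubini into the inner-product term and an adapted It\^o integral, with the variance of the former controlled by the Clark--Ocone/Poincar\'e inequality \eqref{poincare} and the variance of the latter by It\^o's isometry, both reduced to the moment and Malliavin-derivative bounds \eqref{momentup} and Lemma~\ref{mdup} plus the unit mass of the heat kernel. The only cosmetic difference is that you apply \eqref{poincare} directly to $Z_1$ and estimate $\|D_{q,w}Z_1\|_2$, whereas the paper first expands $\var(Z_1)$ as a double integral of covariances of $\sigma(u)^2$ and applies \eqref{poincare} to each covariance; after Fubini these yield the identical bound.
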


\begin{proof}
    From Proposition $1.3.2$ of~\cite{nualart} and \eqref{flt2}, we have
    \begin{equation}\nonumber
        D_{r,z} F_L(t) = 1_{(0,t)}(r) v_{L,t}(r,z) + 1_{(0,t)}(r) \int_r^t \int_0^L D_{r,z} v_{L,t}(s,y) \ W(\mathrm{d} s, \mathrm{d} y) .
    \end{equation}
    Hence, using the stochastic Fubini's theorem, we obtain
    \begin{equation}\nonumber
        \begin{aligned}
            &\left\langle_{} D F_L(t), v_{L,\tau} \right\rangle_{\mathcal{H}} \\
            &= \left\langle_{} v_{L, t} , v_{L,\tau} \right\rangle_{\mathcal{H}} + \int_0^{\tau} \int_0^L v_{L,\tau}(r,z) \left( \int_r^t \int_0^L D_{r,z} v_{L,t}(s,y) \ W(\mathrm{d}s, \mathrm{d}y) \right) \mathrm{d} z \mathrm{d} r \\
            &= \left\langle_{} v_{L, t} , v_{L,\tau} \right\rangle_{\mathcal{H}} + \int_0^t \int_0^L \left( \int_0^{\tau \wedge s} \int_0^L v_{L,\tau}(r,z) D_{r,z} v_{L,t}(s,y) \ \mathrm{d} z \mathrm{d} r \right) W(\mathrm{d}s, \mathrm{d}y) .
        \end{aligned}
    \end{equation}
    Therefore,
    \begin{equation}\nonumber
        \var \left( \left\langle_{} D F_L(t), v_{L,\tau} \right\rangle_{\mathcal{H}} \right) \leq 2( \Phi_{L,t,\tau}^{(1)} + \Phi_{L,t,\tau}^{(2)} ) ,
    \end{equation}
    where
    \begin{equation}\nonumber
        \Phi_{L,t,\tau}^{(1)} = \var \left( \left\langle_{} v_{L,t}, v_{L,\tau} \right\rangle_{\mathcal{H}} \right) ,
    \end{equation}
    \begin{equation}\nonumber
        \Phi_{L,t,\tau}^{(2)} = \var \left( \int_0^t \int_0^L \left( \int_0^{\tau \wedge s} \int_0^L v_{L,\tau}(r,z) D_{r,z} v_{L,t}(s,y) \ \mathrm{d} z \mathrm{d} r \right) W(\mathrm{d}s, \mathrm{d}y) \right) .
    \end{equation}
    From \eqref{vlt},
    \begin{equation}\nonumber
        \begin{aligned}
            &\Phi_{L,t,\tau}^{(1)} \\
            &= \frac{1}{L^4} \var \left( \int_0^{t \wedge \tau} \int_0^L \sigma(u(s,y))^2 \mathcal{I}_0(t-s,y) \mathcal{I}_0(\tau -s,y) \ \mathrm{d} y \mathrm{d} s \right) \\
            &= \frac{1}{L^4} \int_{[0,t \wedge \tau]^2} \int_{[0,L]^2} \cov \left[ \sigma(u(s_1,y_1))^2 , \sigma(u(s_2,y_2))^2 \right] \\
            &\quad \times \mathcal{I}_0(t-s_1,y_1) \mathcal{I}_0(\tau -s_1,y_1) \mathcal{I}_0(t-s_2,y_2) \mathcal{I}_0(\tau -s_2,y_2) \ \mathrm{d} y_1 \mathrm{d} y_2 \mathrm{d} s_1 \mathrm{d} s_2 .
        \end{aligned}
    \end{equation}
    By Lemma~\ref{I0lem}, H\"older's inequality and Minkowski inequality, we have
    \begin{equation}\nonumber
        \begin{aligned}
            &\Phi_{L,t,\tau}^{(1)} \\
            &\leq \frac{4 e^{4 |\alpha| T}}{L^4} \int_{[0,t \wedge \tau]^2} \int_{[0,L]^2} \int_0^{s_1 \wedge s_2} \int_{\mathbb{R}} \| \sigma(u(s_1,y_1)) D_{r,z} \sigma(u(s_1,y_1)) \|_2 \\
            &\quad \times \| \sigma(u(s_2,y_2)) D_{r,z} \sigma(u(s_2,y_2)) \|_2 \ \mathrm{d} z \mathrm{d} r \mathrm{d} y_1 \mathrm{d} y_2 \mathrm{d} s_1 \mathrm{d} s_2 \\
            &\leq \frac{4 e^{4 |\alpha| T} M_{\sigma}^2 (1 + c_{T,4})^2 }{L^4} \int_{[0,t \wedge \tau]^2} \int_{[0,L]^2} \int_0^{s_1 \wedge s_2} \int_{\mathbb{R}} \| D_{r,z} \sigma(u(s_1,y_1)) \|_4 \| D_{r,z} \sigma(u(s_2,y_2)) \|_4 \\
            &\quad \times \mathrm{d} z \mathrm{d} r \mathrm{d} y_1 \mathrm{d} y_2 \mathrm{d} s_1 \mathrm{d} s_2 ,
        \end{aligned}
    \end{equation}
    where $M_{\sigma}$ is the constant in the linear growth condition for $\sigma$.
    From the chain rule of Malliavin derivative for a Lipschitz function with constant $K_{\sigma}$~\cite{nualart},
    \begin{equation}\nonumber
        D_{r,z} \sigma(u(s,y)) = G_{\sigma , u(s,y)} D_{r,z} u(s,y) ,
    \end{equation}
    where $G_{\sigma , u(s,y)}$ is a random variable bounded by $K_{\sigma}$.
    Then, we obtain
    \begin{equation}\nonumber
        \begin{aligned}
            &\Phi_{L,t,\tau}^{(1)} \\
            &\leq \frac{4 e^{4 |\alpha| T} M_{\sigma}^2 (1 + c_{T,4})^2 K_{\sigma}^2 }{L^4} \int_{[0,t \wedge \tau]^2} \int_{[0,L]^2} \int_0^{s_1 \wedge s_2} \int_{\mathbb{R}} \| D_{r,z} u(s_1,y_1) \|_4 \| D_{r,z} u(s_2,y_2) \|_4 \\
            &\quad \times \mathrm{d} z \mathrm{d} r \mathrm{d} y_1 \mathrm{d} y_2 \mathrm{d} s_1 \mathrm{d} s_2 ,
        \end{aligned}
    \end{equation}
    where $K_{\sigma}$ is Lipschitz constant of $\sigma$.
    In the case of Neumann/Dirichlet boundary conditions, by Lemma~\ref{mdup}, we have
    \begin{equation}\nonumber
        \begin{aligned}
            &\Phi_{L,t,\tau}^{(1)} \\
            &\leq \frac{4 e^{4 |\alpha| T} M_{\sigma}^2 (1 + c_{T,4})^2 K_{\sigma}^2 C_{T,4}^2 }{L^4} \int_{[0,t \wedge \tau]^2} \int_{[0,L]^2} \int_0^{s_1 \wedge s_2} \int_{\mathbb{R}} p_{\beta(s_1 - r)}(y_1 - z) p_{\beta(s_2 - r)}(y_2 - z) \\
            &\quad \times \mathrm{d} z \mathrm{d} r \mathrm{d} y_1 \mathrm{d} y_2 \mathrm{d} s_1 \mathrm{d} s_2 \\
            &= \frac{4 e^{4 |\alpha| T} M_{\sigma}^2 (1 + c_{T,4})^2 K_{\sigma}^2 C_{T,4}^2 }{L^4} \int_{[0,t \wedge \tau]^2} \int_{[0,L]^2} \int_0^{s_1 \wedge s_2} p_{\beta(s_1 + s_2 - 2r)}(y_1 - y_2) \\
            &\quad \times \mathrm{d} z \mathrm{d} r \mathrm{d} y_1 \mathrm{d} y_2 \mathrm{d} s_1 \mathrm{d} s_2 ,
        \end{aligned}
    \end{equation}
    where the equality holds by the semigroup property of Gaussian heat kernel $p_t(z)$.
    By using the identity
    \begin{equation}\label{ptid}
        \int_{\mathbb{R}} p_{\beta(s_1 + s_2 - 2r)}(y_1) \mathrm{d} y_1 = 1 ,
    \end{equation}
    we obtain
    \begin{equation}\nonumber
        \begin{aligned}
            &\Phi_{L,t,\tau}^{(1)} \\
            &\leq \frac{4 e^{4 |\alpha| T} M_{\sigma}^2 (1 + c_{T,4})^2 K_{\sigma}^2 C_{T,4}^2 }{L^4} \int_{[0,t \wedge \tau]^2} \int_{[0,L]} \int_{\mathbb{R}}  \int_0^{s_1 \wedge s_2} p_{\beta(s_1 + s_2 - 2r)}(y_1) \\
            &\quad \times \mathrm{d} z \mathrm{d} r \mathrm{d} y_1 \mathrm{d} y_2 \mathrm{d} s_1 \mathrm{d} s_2 \\
            &= \frac{4 e^{4 |\alpha| T} M_{\sigma}^2 (1 + c_{T,4})^2 K_{\sigma}^2 C_{T,4}^2 }{L^3} \int_{[0,t \wedge \tau]^2} \int_0^{s_1 \wedge s_2} \ \mathrm{d} r \mathrm{d} s_1 \mathrm{d} s_2 \\
            &\leq \frac{4 T^3 e^{4 |\alpha| T} M_{\sigma}^2 (1 + c_{T,4})^2 K_{\sigma}^2 C_{T,4}^2 }{L^3} .
        \end{aligned}
    \end{equation}
    In the case of periodic boundary conditions, from Lemma~\ref{mdup}, we have
    \begin{equation}\nonumber
        \begin{aligned}
            &\Phi_{L,t,\tau}^{(1)} \\
            &\leq \frac{4 e^{4 |\alpha| T} M_{\sigma}^2 (1 + c_{T,4})^2 K_{\sigma}^2 C_{T,4}^2 }{L^4} \int_{[0,t \wedge \tau]^2} \int_{[0,L]^2} \int_0^{s_1 \wedge s_2} \int_0^L G_{s_1 - r}(y_1, z) G_{s_2 - r}(y_2, r) \\
            &\quad \times \mathrm{d} z \mathrm{d} r \mathrm{d} y_1 \mathrm{d} y_2 \mathrm{d} s_1 \mathrm{d} s_2 \\
            &= \frac{4 e^{4 |\alpha| T} M_{\sigma}^2 (1 + c_{T,4})^2 K_{\sigma}^2 C_{T,4}^2 }{L^3} \int_{[0,t \wedge \tau]^2} \int_0^{s_1 \wedge s_2} e^{-\alpha(s_1 + s_2 - r)} \mathrm{d} r \mathrm{d} s_1 \mathrm{d} s_2 \\
            &\leq \frac{4 T^3 e^{6 |\alpha| T} M_{\sigma}^2 (1 + c_{T,4})^2 K_{\sigma}^2 C_{T,4}^2 }{L^3} ,
        \end{aligned}
    \end{equation}
    where the equality follows from \eqref{gid}.
    
    We proceed to estimate $\Phi_{L,t,\tau}^{(2)}$.
    By It\^o's isometry, we have
    \begin{equation}\nonumber
        \begin{aligned}
            &\Phi_{L,t,\tau}^{(2)} \\
            &= \int_0^t \int_0^L \left\| \int_0^{\tau \wedge s} \int_0^L v_{L,\tau}(r,z) D_{r,z} v_{L,t}(s,y) \right\|_2^2 \ \mathrm{d} y \mathrm{d} s \\
            &= \frac{1}{L^4} \int_0^t \int_0^L \int_{[0,\tau \wedge s]^2} \int_{[0,L]^2} \mathcal{I}_0(\tau - r_1, z_1) \mathcal{I}_0(\tau - r_2, z_2) \mathcal{I}_0^2(t - s, y) \\
            &\quad \times E[ \sigma(u(r_1,z_1)) ( D_{r_1,z_1} \sigma(u(s,y)) ) \sigma(u(r_2,z_2)) ( D_{r_2,z_2} \sigma(u(s,y)) ) ] \ \mathrm{d} z_1 \mathrm{d} z_2 \mathrm{d} r_1 \mathrm{d} r_2 \mathrm{d} y \mathrm{d} s .
        \end{aligned}
    \end{equation}
    By Lemma~\ref{I0lem}, H\"older's inequality and Minkowski inequality,
    \begin{equation}\nonumber
        \begin{aligned}
            &\Phi_{L,t,\tau}^{(2)} \\
            &\leq \frac{ e^{4 |\alpha| T} }{L^4} \int_0^t \int_0^L \int_{[0,\tau \wedge s]^2} \int_{[0,L]^2} \| \sigma(u(r_1,z_1)) \|_4 \| D_{r_1,z_1} \sigma(u(s,y)) \|_4 \\
            &\quad \times \| \sigma(u(r_2,z_2)) \|_4 \| D_{r_2,z_2} \sigma(u(s,y)) \|_4 \ \mathrm{d} z_1 \mathrm{d} z_2 \mathrm{d} r_1 \mathrm{d} r_2 \mathrm{d} y \mathrm{d} s \\
            &\leq \frac{ e^{4 |\alpha| T} M_{\sigma}^2 (1 + c_{T,4})^2  }{L^4} \int_0^t \int_0^L \int_{[0,\tau \wedge s]^2} \int_{[0,L]^2} \| D_{r_1,z_1} \sigma(u(s,y)) \|_4 \\
            &\quad \times \| D_{r_2,z_2} \sigma(u(s,y)) \|_4 \ \mathrm{d} z_1 \mathrm{d} z_2 \mathrm{d} r_1 \mathrm{d} r_2 \mathrm{d} y \mathrm{d} s ,
        \end{aligned}
    \end{equation}
    where $M_{\sigma}$ is the constant in the linear growth condition for $\sigma$.
    From the chain rule of Malliavin derivative for a Lipschitz function~\cite{nualart}, we obtain
    \begin{equation}\nonumber
        \begin{aligned}
            &\Phi_{L,t,\tau}^{(2)} \\
            &\leq \frac{ e^{4 |\alpha| T} M_{\sigma}^2 (1 + c_{T,4})^2  }{L^4} \int_0^t \int_0^L \int_{[0,\tau \wedge s]^2} \int_{[0,L]^2} \| G_{\sigma , u(s,y)} D_{r_1,z_1} u(s,y) \|_4 \\
            &\quad \times \| \sigma'(u(s,y)) D_{r_2,z_2} u(s,y) \|_4 \ \mathrm{d} z_1 \mathrm{d} z_2 \mathrm{d} r_1 \mathrm{d} r_2 \mathrm{d} y \mathrm{d} s \\
            &\leq \frac{ e^{4 |\alpha| T} M_{\sigma}^2 (1 + c_{T,4})^2 K_{\sigma}^2  }{L^4} \int_0^t \int_0^L \int_{[0,\tau \wedge s]^2} \int_{[0,L]^2} \| D_{r_1,z_1} u(s,y) \|_4 \\
            &\quad \times \| D_{r_2,z_2} u(s,y) \|_4 \ \mathrm{d} z_1 \mathrm{d} z_2 \mathrm{d} r_1 \mathrm{d} r_2 \mathrm{d} y \mathrm{d} s ,
        \end{aligned}
    \end{equation}
    where $K_{\sigma}$ is Lipschitz constant of $\sigma$ and $G_{\sigma , u(s,y)}$ is a random variable bounded by $K_{\sigma}$.
    In the case of Neumann/Dirichlet boundary conditions, applying Lemma~\ref{mdup}, we have
    \begin{equation}\nonumber
        \begin{aligned}
            &\Phi_{L,t,\tau}^{(2)} \\
            &\leq \frac{ e^{4 |\alpha| T} M_{\sigma}^2 (1 + c_{T,4})^2 K_{\sigma}^2 C_{T,4}^2 }{L^4} \int_0^t \int_0^L \int_{[0,\tau \wedge s]^2} \int_{[0,L]^2} \\
            &\quad \times p_{\beta(s - r_1)}(y-z_1) p_{\beta(s - r_2)}(y - z_2) \ \mathrm{d} z_1 \mathrm{d} z_2 \mathrm{d} r_1 \mathrm{d} r_2 \mathrm{d} y \mathrm{d} s \\
            &\leq \frac{ e^{4 |\alpha| T} M_{\sigma}^2 (1 + c_{T,4})^2 K_{\sigma}^2 C_{T,4}^2 }{L^4} \int_0^t \int_{[0,\tau \wedge s]^2} \int_{[0,L]^2} \int_{\mathbb{R}} \\
            &\quad \times p_{\beta(s - r_1)}(y-z_1) p_{\beta(s - r_2)}(y - z_2) \ \mathrm{d} y \mathrm{d} z_1 \mathrm{d} z_2 \mathrm{d} r_1 \mathrm{d} r_2 \mathrm{d} s \\
            &= \frac{ e^{4 |\alpha| T} M_{\sigma}^2 (1 + c_{T,4})^2 K_{\sigma}^2 C_{T,4}^2 }{L^4} \int_0^t \int_{[0,\tau \wedge s]^2} \int_{[0,L]^2} \\
            &\quad \times p_{\beta(2s - r_1 - r_2)}(z_1 - z_2)  \ \mathrm{d} z_1 \mathrm{d} z_2 \mathrm{d} r_1 \mathrm{d} r_2 \mathrm{d} s ,
        \end{aligned}
    \end{equation}
    where the equality holds by the semigroup property of Gaussian heat kernel $p_t(z)$.
    By using the identity \eqref{ptid}, we obtain
    \begin{equation}\nonumber
        \begin{aligned}
            &\Phi_{L,t,\tau}^{(2)} \\
            &\leq \frac{ e^{4 |\alpha| T} M_{\sigma}^2 (1 + c_{T,4})^2 K_{\sigma}^2 C_{T,4}^2 }{L^4} \int_0^t \int_{[0,\tau \wedge s]^2} \int_0^L \int_{\mathbb{R}} \\
            &\quad \times p_{\beta(2s - r_1 - r_2)}(z_1 - z_2)  \ \mathrm{d} z_1 \mathrm{d} z_2 \mathrm{d} r_1 \mathrm{d} r_2 \mathrm{d} s \\
            &= \frac{ e^{4 |\alpha| T} M_{\sigma}^2 (1 + c_{T,4})^2 K_{\sigma}^2 C_{T,4}^2 }{L^3} \int_0^t \int_{[0,\tau \wedge s]^2} \mathrm{d} r_1 \mathrm{d} r_2 \mathrm{d} s \\
            &\leq \frac{ T^3 e^{4 |\alpha| T} M_{\sigma}^2 (1 + c_{T,4})^2 K_{\sigma}^2 C_{T,4}^2 }{L^3}
        \end{aligned}
    \end{equation}
    In the case of periodic boundary conditions, applying Lemma~\ref{mdup} and identity \eqref{gid}, we have
    \begin{equation}\nonumber
        \begin{aligned}
            &\Phi_{L,t,\tau}^{(2)} \\
            &\leq \frac{ e^{4 |\alpha| T} M_{\sigma}^2 (1 + c_{T,4})^2 K_{\sigma}^2 C_{T,4}^2 }{L^4} \int_0^t \int_0^L \int_{[0,\tau \wedge s]^2} \int_{[0,L]^2} \\
            &\quad \times G_{s-r_1}(y,z_1) G_{s-r_2}(y,z_2) \ \mathrm{d} z_1 \mathrm{d} z_2 \mathrm{d} r_1 \mathrm{d} r_2 \mathrm{d} y \mathrm{d} s \\
            &= \frac{ e^{4 |\alpha| T} M_{\sigma}^2 (1 + c_{T,4})^2 K_{\sigma}^2 C_{T,4}^2 }{L^3} \int_0^t \int_{[0,\tau \wedge s]^2} e^{-\alpha(2s - r_1 - r_2)} \ \mathrm{d} r_1 \mathrm{d} r_2 \mathrm{d} s \\
            &\leq \frac{ T^3 e^{6 |\alpha| T} M_{\sigma}^2 (1 + c_{T,4})^2 K_{\sigma}^2 C_{T,4}^2 }{L^3} .
        \end{aligned}
    \end{equation}
    
    This completes the proof.
\end{proof}

We are now ready to prove Theorem~\ref{thm1}.

\begin{proof}[Proof of Theorem~\ref{thm1}]
    Applying Proposition~\ref{keyprop} with $t = \tau$, we have
    \begin{equation}\nonumber
        \sup_{t \in [0,T]} \var \left( \left\langle_{} D F_L(t), v_{L,t} \right\rangle_{\mathcal{H}} \right) \leq \frac{A_T}{L^3} 
    \end{equation}
    for all $L \geq 1$.
    Then, by using \eqref{flt2} and Proposition~\ref{malliavin-stein1},
    \begin{equation}
        \label{dtv1}
        \begin{aligned}
            &d_{\mathrm{TV}}\left( \frac{F_L(t)}{\sqrt{\var(F_L(t))}} , \mathcal{N}(0,1) \right) \\
            &\leq 2 \sqrt{\var \left( \left\langle_{} \frac{D F_L(t)}{\sqrt{\var(F_L(t))}} , \frac{ v_{L,t} }{\sqrt{\var(F_L(t))}} \right\rangle_{\mathcal{H}} \right) } \\
            &\leq \frac{2 \sqrt{A_t}}{ L^{3/2} \var(F_L(t)) } .
        \end{aligned}
    \end{equation}
    From Proposition~\ref{asymp_cov_low}, the asymptotic behavior of the variance is given by
    \begin{equation}\nonumber
        \var(F_L(t)) \gtrsim \frac{1}{L} \frac{\sigma(1)^2}{2} \int_0^{t \wedge \delta} e^{-2\alpha (t-s)} \ \mathrm{d} s,
    \end{equation}
    as $L \to \infty$. Consequently, using this result along with $\sigma(1) \neq 0$ and \eqref{dtv1}, we can deduce \eqref{d_conv}.
\end{proof}

\section{Proof of Theorem~\ref{thm2}}
We prepare the following proposition for the proof of Theorem~\ref{thm2}.
\begin{proposition}\label{prop_momineq}
    For every $T > 0$ and $k \geq 2$, there exists $A_{T,k} > 0$ such that for all $t_1, t_2 \in [0,T]$,
    \begin{equation}\label{momentineq}
        \| F_L(t_2) - F_L(t_1) \|_k \leq A_{T,k} |t_2 - t_1|^{1/2} L^{-1/2}
    \end{equation}
    uniformly for all $L \geq 1$.
\end{proposition}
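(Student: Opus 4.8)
The plan is to start from the stochastic--integral representation $F_L(t)=\delta(v_{L,t})=\int_0^t\int_0^L v_{L,t}(s,y)\,W(\mathrm{d}s,\mathrm{d}y)$ of~\eqref{flt2}--\eqref{vlt}, and to control the increment by isolating the new time slab from the change of the integrand on the common interval. Assuming $t_1\le t_2$ without loss of generality, I would write $F_L(t_2)-F_L(t_1)=\Xi_1+\Xi_2$ with $\Xi_1=\int_{t_1}^{t_2}\int_0^L v_{L,t_2}(s,y)\,W(\mathrm{d}s,\mathrm{d}y)$ and $\Xi_2=\int_0^{t_1}\int_0^L\bigl(v_{L,t_2}(s,y)-v_{L,t_1}(s,y)\bigr)\,W(\mathrm{d}s,\mathrm{d}y)$, and to each term apply the Burkholder--Davis--Gundy inequality followed by Minkowski's integral inequality, which bounds the $L^k(\Omega)$-norm of a Walsh integral $\int\int\phi\,\mathrm{d}W$ by $c_k\bigl(\int\int\|\phi(s,y)\|_k^2\,\mathrm{d}y\,\mathrm{d}s\bigr)^{1/2}$ ($c_k$ the BDG constant, $c_2=1$). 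For $\Xi_1$, on $s\in(t_1,t_2)$ one has $v_{L,t_2}(s,y)=L^{-1}\sigma(u(s,y))\,\mathcal{I}_0(t_2-s,y)$, so the linear growth of $\sigma$, the moment bound~\eqref{momentup}, and $\mathcal{I}_0\le e^{|\alpha|T}$ from Lemma~\ref{I0lem} give $\|v_{L,t_2}(s,y)\|_k\le L^{-1}M_\sigma(1+c_{T,k})e^{|\alpha|T}$; integrating the square over $(t_1,t_2)\times[0,L]$ yields a factor $(t_2-t_1)L\cdot L^{-2}$, hence $\|\Xi_1\|_k$ is at most a constant (depending on $T,k$) times $(t_2-t_1)^{1/2}L^{-1/2}$.

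For $\Xi_2$, on $s\in(0,t_1)$ we have $v_{L,t_2}(s,y)-v_{L,t_1}(s,y)=L^{-1}\sigma(u(s,y))\bigl(\mathcal{I}_0(t_2-s,y)-\mathcal{I}_0(t_1-s,y)\bigr)$, and again $\|\sigma(u(s,y))\|_k\le M_\sigma(1+c_{T,k})$, so it suffices to prove the deterministic estimate
\begin{equation}\label{Iozero_est}
\int_0^{t_1}\int_0^L\bigl|\mathcal{I}_0(t_2-s,y)-\mathcal{I}_0(t_1-s,y)\bigr|^2\,\mathrm{d}y\,\mathrm{d}s\le C_T\,(t_2-t_1)\,L .
\end{equation}
To this end I would use~\eqref{Gg} to write $\mathcal{I}_0(t,x)=e^{-\alpha t}J(t,x)$ with $J(t,x):=\int_0^L g_t(x,y)\,\mathrm{d}y$, $g$ the heat Green's function for the given boundary condition, so that $0\le J\le 1$ and $t\mapsto J(t,x)$ is nonincreasing ($J\equiv 1$ for Neumann and periodic conditions; a survival probability of the killed diffusion for Dirichlet). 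Splitting $\mathcal{I}_0(t_2-s,y)-\mathcal{I}_0(t_1-s,y)=e^{-\alpha(t_2-s)}\bigl(J(t_2-s,y)-J(t_1-s,y)\bigr)+\bigl(e^{-\alpha(t_2-s)}-e^{-\alpha(t_1-s)}\bigr)J(t_1-s,y)$, the second summand is bounded pointwise by a $T$-dependent constant times $t_2-t_1$ (from $|e^{-\alpha h}-1|\le|\alpha|h\,e^{|\alpha|h}$ and $0\le J\le 1$), contributing at most a constant times $(t_2-t_1)^2L\le T(t_2-t_1)L$; for the first summand, $0\le J(t_1-s,y)-J(t_2-s,y)\le 1$ gives $|J(t_2-s,y)-J(t_1-s,y)|^2\le J(t_1-s,y)-J(t_2-s,y)$, and with $\Psi(r):=\int_0^L J(r,y)\,\mathrm{d}y\in[0,L]$ the substitution $r=t_1-s$ telescopes:
\begin{equation}\nonumber
\int_0^{t_1}\int_0^L\bigl(J(t_1-s,y)-J(t_2-s,y)\bigr)\,\mathrm{d}y\,\mathrm{d}s=\int_0^{t_1}\bigl(\Psi(r)-\Psi(r+t_2-t_1)\bigr)\,\mathrm{d}r=\int_0^{t_2-t_1}\Psi(r)\,\mathrm{d}r-\int_{t_1}^{t_2}\Psi(r)\,\mathrm{d}r,
\end{equation}
which is at most $2(t_2-t_1)L$ in absolute value since $0\le\Psi\le L$. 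This establishes~\eqref{Iozero_est}, whence $\|\Xi_2\|_k\le c_k L^{-1}M_\sigma(1+c_{T,k})\bigl(C_T(t_2-t_1)L\bigr)^{1/2}$, again of order $(t_2-t_1)^{1/2}L^{-1/2}$. Adding the bounds for $\Xi_1$ and $\Xi_2$ gives~\eqref{momentineq} with $A_{T,k}$ independent of $L\ge 1$.

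I expect~\eqref{Iozero_est} to be the only genuine obstacle. Estimating the time increment of $\mathcal{I}_0$ through $\partial_t\mathcal{I}_0$ would confront the non-integrable short-time blow-up of heat-kernel time derivatives as $s\uparrow t_1$, and would be awkward for Dirichlet conditions because of the boundary-flux terms. Passing to the mass function $J$ — using that it is bounded by $1$ and monotone in $t$, so that its time increment is dominated by its own value, and then telescoping the spatial integral $\Psi$ — replaces all required heat-semigroup input by the two elementary facts $\mathcal{I}_0(t,x)=e^{-\alpha t}J(t,x)$ and $0\le J\le 1$, and handles the three boundary conditions uniformly.
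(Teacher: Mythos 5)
Your proof is correct, and while the skeleton is the same as the paper's (split off the new time slab $(t_1,t_2)$, handle the common interval via the increment of $\mathcal{I}_0$, and apply Burkholder/BDG with the moment bound \eqref{momentup} and Lemma~\ref{I0lem}), your treatment of the deterministic estimate
$\int_0^{t_1}\int_0^L|\mathcal{I}_0(t_2-s,y)-\mathcal{I}_0(t_1-s,y)|^2\,\mathrm{d}y\,\mathrm{d}s\le C_T(t_2-t_1)L$
is genuinely different where it matters, namely for Dirichlet boundary conditions. The paper expands $\mathcal{I}_0$ in the sine eigenbasis via \eqref{dirichlet_rep}, uses $L^2([0,L])$-orthogonality to reduce to a series in $n$, and then splits that series at the threshold $n\approx|t_2-t_1|^{-1/2}L/\pi$ using $1-e^{-x}\le 1\wedge x$; this is an explicit but somewhat delicate computation tied to the interval geometry. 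You instead factor $\mathcal{I}_0(t,x)=e^{-\alpha t}J(t,x)$ via \eqref{Gg}, observe that the heat-kernel mass $J$ takes values in $[0,1]$ and is nonincreasing in $t$ (constant $\equiv 1$ for Neumann/periodic, a survival probability for Dirichlet), dominate the squared increment by the increment itself, and telescope the integral of $\Psi(r)=\int_0^L J(r,y)\,\mathrm{d}y\le L$. This is shorter, treats all three boundary conditions uniformly, and avoids the eigenfunction expansion entirely; the only inputs beyond the paper's Lemma~\ref{a1} are the standard facts that the Dirichlet kernel is nonnegative, dominated by the free kernel (so $J\le1$), and that $J$ is monotone in $t$ by the Markov property — worth stating explicitly with a one-line justification if you write this up, since the paper only records the weaker bound $\mathcal{I}_0\le e^{|\alpha|T}$. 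The trade-off is that the paper's computation is self-contained given its Appendix, whereas yours leans on the probabilistic interpretation of the killed semigroup; both yield the same rate $|t_2-t_1|^{1/2}L^{-1/2}$.
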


\begin{proof}
    From \eqref{flt},
    \begin{equation}\label{flt3}
        F_L(t) = \frac{1}{L} \int_0^t \int_0^L \mathcal{I}_0(t-s,y) \sigma(u(s,y)) \ W(\mathrm{d}s, \mathrm{d} y) .
    \end{equation}
    We assume that $t_1 \leq t_2$.
    Applying Burkholder's inequality, for all $k \geq 2$,
    \begin{equation}\nonumber
        \begin{aligned}
            &\| F_L(t_2) - F_L(t_1) \|_k^2 \\
            &\leq \frac{2 z_k^2}{L^2} \left( \int_{t_1}^{t_2} \int_0^L \mathcal{I}_0^2(t_2 - s, y) \| \sigma(u(s,y)) \|_k^2 \ \mathrm{d} y \mathrm{d} s \right. \\
            &\quad + \left. \int_0^{t_1} \int_0^L ( \mathcal{I}_0(t_2 - s, y) - \mathcal{I}_0(t_1 - s, y) )^2 \| \sigma(u(s,y)) \|_k^2 \ \mathrm{d} y \mathrm{d} s  \right) ,
        \end{aligned}
    \end{equation}
    where $z_k$ is the constant in Burkholder's inequality.
    By using \eqref{momentup}, Minkowski's inequality and the change of variables,
    \begin{equation}\label{fl2fl1}
        \begin{aligned}
            &\| F_L(t_2) - F_L(t_1) \|_k^2 \\
            &\leq \frac{2 z_k^2 M_{\sigma}^2 (1 + c_{T,k})^2 }{L^2} \left( \int_{t_1}^{t_2} \int_0^L \mathcal{I}_0^2(t_2 - s, y) \ \mathrm{d} y \mathrm{d} s \right. \\
            &\quad + \left. \int_0^{t_1} \int_0^L ( \mathcal{I}_0(t_2 - s, y) - \mathcal{I}_0(t_1 - s, y) )^2 \ \mathrm{d} y \mathrm{d} s  \right) \\
            &= \frac{2 z_k^2 M_{\sigma}^2 (1 + c_{T,k})^2 }{L^2} \left( \int_{t_1}^{t_2} \int_0^L \mathcal{I}_0^2(t_2 - s, y) \ \mathrm{d} y \mathrm{d} s \right. \\
            &\quad + \left. \int_0^{t_1} \int_0^L ( \mathcal{I}_0(t_2 - t_1 + s, y) - \mathcal{I}_0(s, y) )^2 \ \mathrm{d} y \mathrm{d} s  \right) ,
        \end{aligned}
    \end{equation}
    where $M_{\sigma}$ is the constant in the linear growth condition for $\sigma$.
    In the case of Neumann/periodic boundary conditions, using $\mathcal{I}_0(t,y) = e^{-\alpha t}$(see Lemma~\ref{I0lem} and \eqref{gid} in Lemma~\ref{a1}),
    \begin{equation}\nonumber
        \begin{aligned}
            &\| F_L(t_2) - F_L(t_1) \|_k^2 \\
            &\leq \frac{2 z_k^2 M_{\sigma}^2 (1 + c_{T,k})^2 }{L^2} \left( \int_{t_1}^{t_2} \int_0^L e^{-2\alpha (t_2 - s)} \ \mathrm{d} y \mathrm{d} s \right. \\
            &\quad + \left. \int_0^{t_1} \int_0^L ( e^{-\alpha (t_2 - t_1 + s)} - e^{-\alpha s} )^2 \ \mathrm{d} y \mathrm{d} s  \right) .
        \end{aligned}
    \end{equation}
    From the following inequality
    \begin{equation}\nonumber
        | e^{-\alpha(t_2 - t_1)} - 1 | \leq |\alpha| e^{|\alpha| T } |t_2 - t_1| \quad \mathrm{for} \ t_1, t_2 \in [0,T] ,
    \end{equation}
    we have
    \begin{equation}\nonumber
        \begin{aligned}
            &\| F_L(t_2) - F_L(t_1) \|_k^2 \\
            &\leq \frac{2 z_k^2 M_{\sigma}^2 (1 + c_{T,k})^2 }{L^2} \left( \int_{t_1}^{t_2} \int_0^L e^{2 | \alpha | T} \ \mathrm{d} y \mathrm{d} s \right. \\
            &\quad + \left. \int_0^{t_1} \int_0^L e^{4|\alpha| T} |\alpha|^2 |t_2 - t_1|^2  \ \mathrm{d} y \mathrm{d} s  \right) \\
            &\leq \frac{2 z_k^2 M_{\sigma}^2 (1 + c_{T,k})^2 }{L} \left( e^{2 | \alpha | T} |t_2 - t_1| + T e^{4|\alpha| T} |\alpha|^2 |t_2 - t_1|^2 \right) \\
            &\leq \frac{2 z_k^2 M_{\sigma}^2 (1 + c_{T,k})^2 }{L} \left( e^{2 | \alpha | T} + T^2 e^{4|\alpha| T} |\alpha|^2 \right) |t_2 - t_1| .
        \end{aligned}
    \end{equation}
    In the case of Dirichlet boundary conditions, using the representation of Dirichlet heat kernel \eqref{dirichlet_rep},
    \begin{equation}\nonumber
        \begin{aligned}
            &\mathcal{I}_0(t_2 - t_1 + s, y) - \mathcal{I}_0(s,y) \\
            &= \frac{2}{L} \sum_{n=1}^{\infty} \sin \left( \frac{n \pi y}{L} \right) \int_0^L \sin \left( \frac{n \pi z}{L} \right) \ \mathrm{d} z \left[ e^{-\alpha(t_2 - t_1 + s)}e^{-\frac{n^2 \pi^2 (t_2 - t_1 + s)}{2L^2}} - e^{-\alpha s} e^{-\frac{n^2 \pi^2 s}{2L^2}} \right] \\
            &= \frac{2 e^{-\alpha s} }{L} \sum_{n=1}^{\infty} \sin \left( \frac{n \pi y}{L} \right) \frac{1 - \cos(n\pi)}{n\pi} e^{-\frac{n^2 \pi^2 s}{2L^2}} \left[ e^{-\alpha(t_2 - t_1)}e^{-\frac{n^2 \pi^2 (t_2 - t_1)}{2L^2}} - 1 \right] .
        \end{aligned}
    \end{equation}
    Then, applying the $L^2([0,L])$-orthogonality of the functions $\{y \mapsto \sin (n \pi y / L) \}_{n=1}^{\infty}$, we obtain
    \begin{equation}\nonumber
        \begin{aligned}
            &\int_0^{t_1} \int_0^L ( \mathcal{I}_0(t_2 - t_1 + s, y) - \mathcal{I}_0(s, y) )^2 \ \mathrm{d} y \mathrm{d} s \\
            &= 4 \int_0^{t_1} e^{-2\alpha s} \sum_{n=1}^{\infty} \int_0^L \sin^2 \left( \frac{n\pi y}{L} \right) \ \mathrm{d} y \left( \frac{1 - \cos(n\pi)}{n\pi} \right)^2 e^{-\frac{n^2 \pi^2 s}{L^2}} \left[ e^{-\alpha(t_2 - t_1)}e^{-\frac{n^2 \pi^2 (t_2 - t_1)}{2L^2}} - 1 \right]^2 \ \mathrm{d} s \\
            &= 2L \int_0^{t_1} e^{-2\alpha s} \sum_{n=1}^{\infty} \left( \frac{1 - \cos(n\pi)}{n\pi} \right)^2 e^{-\frac{n^2 \pi^2 s}{L^2}} \left[ e^{-\alpha(t_2 - t_1)}e^{-\frac{n^2 \pi^2 (t_2 - t_1)}{2L^2}} - 1 \right]^2 \ \mathrm{d} s \\
            &\leq 8L \int_0^{t_1} e^{-2\alpha s} \sum_{n=1}^{\infty} \frac{1}{n^2 \pi^2} e^{-\frac{n^2 \pi^2 s}{L^2}} \left[ e^{-\alpha(t_2 - t_1)}e^{-\frac{n^2 \pi^2 (t_2 - t_1)}{2L^2}} - 1 \right]^2 \ \mathrm{d} s .
        \end{aligned}
    \end{equation}
    By using the following inequality
    \begin{equation}\nonumber
        \begin{aligned}
            &\left| e^{-\alpha(t_2 - t_1)}e^{-\frac{n^2 \pi^2 (t_2 - t_1)}{2L^2}} - 1 \right| \\
            &\leq \left| e^{-\alpha(t_2 - t_1)}e^{-\frac{n^2 \pi^2 (t_2 - t_1)}{2L^2}} - e^{-\frac{n^2 \pi^2 (t_2 - t_1)}{2L^2}} \right| + \left| e^{-\frac{n^2 \pi^2 (t_2 - t_1)}{2L^2}} - 1 \right| \\
            &= e^{-\frac{n^2 \pi^2 (t_2 - t_1)}{2L^2}} \left| e^{-\alpha(t_2 - t_1)} - 1 \right| + \left| e^{-\frac{n^2 \pi^2 (t_2 - t_1)}{2L^2}} - 1 \right| \\
            &\leq |\alpha| e^{|\alpha| T} |t_2 - t_1| + \left| e^{-\frac{n^2 \pi^2 (t_2 - t_1)}{2L^2}} - 1 \right| \quad \mathrm{for} \ t_1, t_2 \in [0,T] ,
        \end{aligned}
    \end{equation}
    we have
    \begin{equation}\nonumber
        \begin{aligned}
            &\int_0^{t_1} \int_0^L ( \mathcal{I}_0(t_2 - t_1 + s, y) - \mathcal{I}_0(s, y) )^2 \ \mathrm{d} y \mathrm{d} s \\
            &\leq 16L e^{2|\alpha| T} \sum_{n=1}^{\infty} \frac{1}{n^2 \pi^2} \int_0^{t_1} e^{-\frac{n^2 \pi^2 s}{L^2}} \ \mathrm{d} s \left( |\alpha|^2 e^{2|\alpha| T} |t_2 - t_1|^2 + \left| e^{-\frac{n^2 \pi^2 (t_2 - t_1)}{2L^2}} - 1 \right|^2 \right) \\
            &= \frac{16}{6} L |\alpha|^2 T e^{4|\alpha| T} |t_2 - t_1|^2 + 16L e^{2|\alpha| T} \sum_{n=1}^{\infty} \frac{1}{n^2 \pi^2} \int_0^{t_1} e^{-\frac{n^2 \pi^2 s}{L^2}} \ \mathrm{d} s \left| e^{-\frac{n^2 \pi^2 (t_2 - t_1)}{2L^2}} - 1 \right|^2 .
        \end{aligned}
    \end{equation}
    Using the fact that $1 - e^{-x} \leq 1 \wedge x$ for all $x \geq 0$,
    \begin{equation}\nonumber
        \begin{aligned}
            &16L e^{2|\alpha| T} \sum_{n=1}^{\infty} \frac{1}{n^2 \pi^2} \int_0^{t_1} e^{-\frac{n^2 \pi^2 s}{L^2}} \ \mathrm{d} s \left| e^{-\frac{n^2 \pi^2 (t_2 - t_1)}{2L^2}} - 1 \right|^2 \\
            &= 16L e^{2|\alpha| T} \sum_{n=1}^{\infty} \frac{1}{n^2 \pi^2} \frac{L^2}{n^2 \pi^2} \left( 1 - e^{-\frac{n^2 \pi^2 t_1}{L^2}} \right) \left| e^{-\frac{n^2 \pi^2 (t_2 - t_1)}{2L^2}} - 1 \right|^2 \\
            &\leq 16L e^{2|\alpha| T} \sum_{n=1}^{\infty} \frac{1}{n^2 \pi^2} \left| 1 \wedge \frac{n^2 \pi^2 (t_2 - t_1)}{2 L^2} \right|^2 \frac{L^2}{n^2 \pi^2} \\
            &\leq \frac{16 e^{2|\alpha| T}}{L} \sum_{n=1}^{\infty} \left( \frac{L^4}{n^4 \pi^4} \wedge |t_2 - t_1|^2 \right) .
        \end{aligned}
    \end{equation}
    Moreover, we obtain
    \begin{equation}\nonumber
        \begin{aligned}
            &\frac{16 e^{2|\alpha| T}}{L} \sum_{n=1}^{\infty} \left( \frac{L^4}{n^4 \pi^4} \wedge |t_2 - t_1|^2 \right) \\
            &\leq \frac{16 e^{2|\alpha| T}}{L} \left( \sum_{n \leq |t_2 - t_1|^{-1/2} L / \pi} |t_2 - t_1|^2 + \frac{L^4}{\pi^4} \sum_{n > |t_2 - t_1|^{-1/2} L / \pi} \frac{1}{n^4} \right) \\
            &\leq \frac{16 e^{2|\alpha| T}}{L} \left( \frac{L}{\pi} |t_2 - t_1|^{3/2} + \frac{L^4}{\pi^4} \int_{|t_2 - t_1|^{-1/2} L / (2\pi)}^{\infty} \frac{1}{y^4} \ \mathrm{d} y \right) \\
            &= \frac{176}{3\pi} e^{2|\alpha| T} |t_2 - t_1|^{3/2} .
        \end{aligned}
    \end{equation}
    Then, we have
    \begin{equation}\nonumber
        \begin{aligned}
            &\int_0^{t_1} \int_0^L ( \mathcal{I}_0(t_2 - t_1 + s, y) - \mathcal{I}_0(s, y) )^2 \ \mathrm{d} y \mathrm{d} s \\
            &\leq \frac{16}{6} L |\alpha|^2 T e^{4|\alpha| T} |t_2 - t_1|^2 + \frac{176}{3\pi} e^{2|\alpha| T} |t_2 - t_1|^{3/2} \\
            &\leq \left( \frac{16}{6} L |\alpha|^2 T^2 e^{4|\alpha| T} + \frac{176}{3\pi} T^{1/2} e^{2|\alpha| T} \right) |t_2 - t_1| .
        \end{aligned}
    \end{equation}
    Hence, from \eqref{fl2fl1} and Lemma~\ref{I0lem}, we obtain
    \begin{equation}\nonumber
        \begin{aligned}
            &\| F_L(t_2) - F_L(t_1) \|_k^2 \\
            &\leq \frac{2 z_k^2 M_{\sigma}^2 (1 + c_{T,k})^2 }{L^2} \left( \int_{t_1}^{t_2} \int_0^L \mathcal{I}_0^2(t_2 - s, y) \ \mathrm{d} y \mathrm{d} s \right. \\
            &\quad + \left. \int_0^{t_1} \int_0^L ( \mathcal{I}_0(t_2 - t_1 + s, y) - \mathcal{I}_0(s, y) )^2 \ \mathrm{d} y \mathrm{d} s  \right) \\
            &\leq \frac{2 z_k^2 M_{\sigma}^2 (1 + c_{T,k})^2 }{L^2} \left( L e^{2|\alpha| T} |t_2 - t_1| + \left( \frac{16}{6} L |\alpha|^2 T^2 e^{4|\alpha| T} + \frac{176}{3\pi} T^{1/2} e^{2|\alpha| T} \right) |t_2 - t_1|  \right) \\
            &\leq \frac{2 z_k^2 M_{\sigma}^2 (1 + c_{T,k})^2 }{L} \left( e^{2|\alpha| T} + \frac{16}{6} |\alpha|^2 T^2 e^{4|\alpha| T} + \frac{176}{3\pi} T^{1/2} e^{2|\alpha| T} \right) |t_2 - t_1| .
        \end{aligned}
    \end{equation}
    Therefore, in the case of Neumann, Dirichlet, or periodic boundary conditions, we have \eqref{momentineq}.
    This completes the proof.
\end{proof}

\begin{proof}[Proof of Theorem~\ref{thm2}]
    The tightness of the family of processes $\{ \sqrt{L} F_L(\cdot) \}_{L \geq 1}$ in $C([0,T])$ is guaranteed by Proposition~\ref{prop_momineq} (see, e.g.,~\cite{karatzas_shreve}).
    What remains is to show that the finite-dimensional distributions of $( \sqrt{L} F_L(t) )_{t \in [0,T]}$ converge to those of $(\int_0^t e^{-\alpha(t-s)} \sqrt{f_{\sigma}(s)} \ \mathrm{d} W_s)_{t \in [0,T]}$ as $L \to \infty$
    
    Let us fix $T > 0$ and $m \geq 1$ points $t_1, \dots, t_m \in (0,T]$.
    According to Proposition~\ref{asymp_cov}, as $L \to \infty$, the covariance satisfies
    \begin{equation}\label{covsim}
        \cov\left[ F_L(t_i), F_L(t_j) \right] \sim \frac{1}{L} \int_0^{t_i \wedge t_j} e^{-\alpha(t_i + t_j - 2s)} f_{\sigma}(s) \ \mathrm{d} s
    \end{equation}
    for all $i,j = 1, \dots, m$.
    To proceed, let us define the vector $F := (F_1, \dots, F_m)$ with components
    \begin{equation}\nonumber
        F_i := \frac{F_L(t_i)}{\sqrt{\var(F_L(t_i))}}.
    \end{equation}
    We then define $G = (G_1, \dots, G_m)$ to be a centered Gaussian random vector whose covariance matrix $C = (C_{i,j})$ is given by
    \begin{equation}\nonumber
        C_{i,j} := \cov \left[ F_i , F_j \right].
    \end{equation}

    Let us introduce the rescaled random fields $V_1, \dots, V_m$ by setting
    \begin{equation}\nonumber
        V_i := \frac{v_{L,t_i}}{\sqrt{\var(F_L(t_i))}}, \quad i=1, \dots, m,
    \end{equation}
    where the fields $v_{L,t_i}$ are given in \eqref{vlt}.
    From \eqref{flt2}, we have the relation $F_i = \delta(V_i)$.
    By duality, $E[ \langle D F_i , V_j \rangle_{\mathcal{H}} ] = E[ F_i \delta(V_j) ] = C_{i,j}$ for all $i, j = 1, \dots, m$.
    Applying Proposition~\ref{malliavin-stein2} then yields the desired bound for any $h \in C^2(\mathbb{R}^m)$:
    \begin{equation}\nonumber
        \left| E[h(F)] - E[h(G)] \right| \leq \frac{1}{2} \| h'' \|_{\infty} \sqrt{ \sum_{i,j=1}^m \var\left( \langle D F_i , V_j \rangle_{\mathcal{H}} \right) }.
    \end{equation}
    From Proposition~\ref{keyprop},
    \begin{equation}\nonumber
        \begin{aligned}
            \var\left( \langle D F_i , V_j \rangle_{\mathcal{H}} \right) &= \frac{ \var\left( \langle D F_L(t_i) , v_{L,t_j} \rangle_{\mathcal{H}} \right) }{ \var( F_L(t_i) ) \var( F_L(t_j) ) } \\
            &\leq \frac{A_T}{L^3 \var( F_L(t_i) ) \var( F_L(t_j) )} ,
        \end{aligned}
    \end{equation}
    which together with \eqref{covsim} implies that
    \begin{equation}\nonumber
        \lim_{L \to \infty} \left| E[h(F)] - E[h(G)] \right| = 0
    \end{equation}
    for all $h \in C^2(\mathbb{R}^m)$.

    We also examine the limit of the covariances $C_{i,j}$. Using the result from \eqref{covsim}, we find that as $L \to \infty$,
    \begin{equation}\nonumber
        C_{i,j} \to \frac{ \int_0^{t_i \wedge t_j} e^{-\alpha(t_i + t_j - 2s)} f_{\sigma}(s) \ \mathrm{d} s }{ \sqrt{\int_0^{t_i} e^{-2\alpha(t_i - s)} f_{\sigma}(s) \ \mathrm{d} s } \sqrt{\int_0^{t_j} e^{-2\alpha(t_j - s)} f_{\sigma}(s) \ \mathrm{d} s } }.
    \end{equation}
    It follows that the law of the Gaussian vector $G$, which is determined by its covariance structure, converges weakly to that of
    \begin{equation}\label{G_conv}
        \begin{aligned}
            \left( \frac{ \int_0^{t_1} e^{-\alpha(t_1 - s)} \sqrt{f_{\sigma}(s)} \ \mathrm{d} W_s }{ \sqrt{ \int_0^{t_1} e^{-2\alpha (t_1 - s)} f_{\sigma}(s) \ \mathrm{d} s } } , \dots, \frac{ \int_0^{t_m} e^{-\alpha(t_m - s)} \sqrt{f_{\sigma}(s)} \ \mathrm{d} W_s }{ \sqrt{ \int_0^{t_m} e^{-2\alpha (t_m - s)} f_{\sigma}(s) \ \mathrm{d} s } } \right).
        \end{aligned}
    \end{equation}

    Therefore, $F$ converges weakly to the random vector in \eqref{G_conv} as $L \to \infty$.
    Recalling the definition of $F_i$ and using the asymptotic variance from \eqref{covsim}, the random vector
    \begin{equation}\nonumber
        \sqrt{L} \left( \frac{ F_L(t_1) }{ \sqrt{ \int_0^{t_1} e^{-2\alpha (t_1 - s)} f_{\sigma}(s) \ \mathrm{d} s  } } , \dots, \frac{ F_L(t_m) }{ \sqrt{ \int_0^{t_m} e^{-2\alpha (t_m - s)} f_{\sigma}(s) \ \mathrm{d} s  } } \right) 
    \end{equation}
    converges to the random vector in \eqref{G_conv} as $L \to \infty$.
    This completes the proof.
\end{proof}

\section{On the limit in Assumption~\ref{assump}}
\label{on_assump}
If $\sigma(u) = \sigma_1 u + \sigma_0$ for some constants $\sigma_1$ and $\sigma_0$, we can calculate the limit $f_{\sigma}(t)$ in Assumption~\ref{assump}.
Our calculation is based on the Wiener chaos decomposition of $u(t,x)$.
Let 
\begin{equation}\nonumber
    u_0(t,x) = \int_0^L G_t(x,y) \ \mathrm{d} y
\end{equation}
for every $(0,T] \times [0,L]$ and $u_0(0,x) = u_0(x) = 1$ for all $x \in [0,L]$.
Let
\begin{equation}\nonumber
    \mathcal{I}_0(t,x) := u_0(t,x)
\end{equation}
and
\begin{equation}\nonumber
    \begin{aligned}
        \mathcal{I}_k(t,x) := &\sigma_1^{k-1} \int_0^t \int_0^L \dots \int_0^{r_{k-1}} \int_0^L G_{t-r_1}(x,z_1) \dots G_{r_{k-1} - r_k}(z_{k-1}, z_k) \\
        &\qquad \qquad \times (\sigma_1 \mathcal{I}_0(r_k, z_k) + \sigma_0) \ W(\mathrm{d}r_k, \mathrm{d}z_k) \dots W(\mathrm{d}r_1, \mathrm{d}z_1)
    \end{aligned}
\end{equation}
for all $k \geq 1$.
We define the Picard iteration $\{ u_n(t,x) \}_{n=0}^{\infty}$ for $u(t,x)$.
Define iteratively, for every $n \geq 0$,
\begin{equation}\label{picard}
    u_{n+1}(t,x) := u_0(t,x) + \int_0^t \int_0^L G_{t-r}(x,z) \sigma(u_n(r,z)) \ W(\mathrm{d}r, \mathrm{d}z) .
\end{equation}
Since we only consider the case $\sigma(u) = \sigma_1 u + \sigma_0$ through this section, we have
\begin{equation}\nonumber
    \begin{aligned}
        &u_{n+1}(t,x) \\
        &= u_0(t,x) + \int_0^t \int_0^L G_{t-r_1}(x,z_1) (\sigma_1 u_n(r_1,z_1) + \sigma_0) \ W(\mathrm{d}r_1, \mathrm{d}z_1) .
    \end{aligned}
\end{equation}
Then, by using mathematical induction, we obtain the Wiener chaos decomposition
\begin{equation}\label{un_chaos}
    u_{n+1}(t,x) = \sum_{k=0}^{n} \mathcal{I}_k(t,x) .
\end{equation}
Therefore, $u_n \to u$ in $L^p$ ($p \geq 2$) (see~\cite{walsh}) and applying \eqref{un_chaos}, we have
\begin{equation}\label{u_chaos}
    u(t,x) = \sum_{k=0}^{\infty} \mathcal{I}_k(t,x) .
\end{equation}
Moreover, by multiple It\^o's isometry,
\begin{equation}\label{mlito}
    \| u(t,x) \|_2^2 = \sum_{k=0}^{\infty} \| \mathcal{I}_k(t,x) \|_2^2
\end{equation}
where
\begin{equation}\nonumber
    \begin{aligned}
        \| \mathcal{I}_k(t,x) \|_2^2 = &\sigma_1^{2(k-1)} \int_0^t \int_0^L \dots \int_0^{r_{k-1}} \int_0^L G_{t-r_1}^2(x,z_1) \dots G_{r_{k-1} - r_k}^2(z_{k-1}, z_k) \\
        &\qquad \qquad \times (\sigma_1 \mathcal{I}_0(r_k, z_k) + \sigma_0)^2 \ \mathrm{d} r_k \mathrm{d} z_k \dots \mathrm{d} r_1 \mathrm{d} z_1
    \end{aligned}
\end{equation}
for all $k \geq 1$.

Using Lemma~\ref{I0lem}, we get the following Propositions.

\begin{proposition}
    \label{Iknd}
    Fix $T > 0$.
    In the case of Neumann/Dirichlet boundary conditions, for every $k \geq 1$
    \begin{equation}\label{Iknd1}
        \sup_{L \geq 1} \sup_{(t,x) \in [0,T] \times [0,L]} \| \mathcal{I}_k(t,x) \| \leq \sigma_1^{2(k-1)} (|\sigma_1| e^{|\alpha|T} + |\sigma_0| )^2 \frac{K_T^{2k} (4\beta)^{-k/2} T^{k/2}}{\Gamma((k+2)/2)} ,
    \end{equation}
    where  the constant $K_T$ is defined in \eqref{const_kt}.
    Moreover, for every $t > 0$ and $k \geq 1$, there exists $f_k(t)$ such that
    \begin{equation}\nonumber
        \lim_{L \to \infty} \frac{1}{L} \int_0^L \| \mathcal{I}_k(t,x) \|_2^2 \ \mathrm{d} x = f_k(t) .
    \end{equation}
\end{proposition}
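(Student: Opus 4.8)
The plan is to prove the two assertions separately, both starting from the multiple‑It\^o‑isometry formula for $\|\mathcal{I}_k(t,x)\|_2^2$ displayed just before the proposition (the left‑hand side of \eqref{Iknd1} being read as that second moment).

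\emph{The uniform bound.} I would first bound the last factor via Lemma~\ref{I0lem} by $(\sigma_1\mathcal{I}_0(r_k,z_k)+\sigma_0)^2\le(|\sigma_1|e^{|\alpha|T}+|\sigma_0|)^2$, and then use the Gaussian upper bound $G_s(z,w)\le K_T\,p_{\beta s}(z-w)$ for $0<s\le T$ valid in the Neumann and Dirichlet cases — this is where the constant $K_T$ of \eqref{const_kt} enters, absorbing the factor $e^{|\alpha|s}$ from $-\alpha u$ and the constant from the method of images. Applying this to each of the $k$ kernels $G^2_{r_{i-1}-r_i}$ and integrating the spatial variables in the order $z_k,z_{k-1},\dots,z_1$ — so that at each step the variable being integrated occurs in only one remaining factor and $\int_{\mathbb{R}}p_{\beta a}(u)^2\,\mathrm{d}u=(4\pi\beta a)^{-1/2}$ may be applied — yields
\[
\|\mathcal{I}_k(t,x)\|_2^2\le\sigma_1^{2(k-1)}(|\sigma_1|e^{|\alpha|T}+|\sigma_0|)^2K_T^{2k}\int_{0<r_k<\cdots<r_1<t}\prod_{i=1}^k\frac{\mathrm{d}r_i}{\sqrt{4\pi\beta(r_{i-1}-r_i)}},\qquad r_0:=t .
\]
The substitution $a_i=r_{i-1}-r_i$ $(1\le i\le k)$ turns the time integral into the Dirichlet integral $\int_{\{a_i>0,\ \sum_{i=1}^k a_i<t\}}\prod_i a_i^{-1/2}\,\mathrm{d}a=t^{k/2}\Gamma(1/2)^k/\Gamma((k+2)/2)$; collecting $(4\pi\beta)^{-k/2}\pi^{k/2}=(4\beta)^{-k/2}$ and $t^{k/2}\le T^{k/2}$ reproduces the right‑hand side of \eqref{Iknd1}.

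\emph{Existence of $f_k(t)$.} Write $\frac1L\int_0^L\|\mathcal{I}_k(t,x)\|_2^2\,\mathrm{d}x=\int_{\Delta_k(t)}H_L(r_1,\dots,r_k)\,\mathrm{d}r$, where $\Delta_k(t)=\{0<r_k<\cdots<r_1<t\}$ and, with $z_0=x$, $r_0=t$,
\[
H_L(r_1,\dots,r_k)=\frac{\sigma_1^{2(k-1)}}{L}\int_{[0,L]^{k+1}}\prod_{i=1}^k G^2_{r_{i-1}-r_i}(z_{i-1},z_i)\,(\sigma_1\mathcal{I}_0(r_k,z_k)+\sigma_0)^2\,\mathrm{d}z_0\cdots\mathrm{d}z_k .
\]
Repeating the spatial estimate of the first step (integrating $z_0,\dots,z_{k-1}$ against the $p^2$‑kernels over $[0,L]\subset\mathbb{R}$, and $z_k$ against the bounded last factor, the resulting factor $L$ cancelling $1/L$) gives the $L$‑uniform domination $|H_L|\le\sigma_1^{2(k-1)}(|\sigma_1|e^{|\alpha|T}+|\sigma_0|)^2K_T^{2k}\prod_{i=1}^k(4\pi\beta(r_{i-1}-r_i))^{-1/2}$, which is integrable on $\Delta_k(t)$ by the Dirichlet‑integral computation. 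By dominated convergence it therefore suffices to prove that $H_L(r_1,\dots,r_k)$ converges for a.e.\ $(r_1,\dots,r_k)$ as $L\to\infty$; then $f_k(t):=\int_{\Delta_k(t)}\lim_L H_L\,\mathrm{d}r$ is the desired finite limit.

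For the pointwise limit, factor out the $z_k$‑integral: $H_L=\sigma_1^{2(k-1)}e^{-2\alpha(t-r_k)}\,\frac1L\int_0^L J_L(z_k)\,(\sigma_1\mathcal{I}_0(r_k,z_k)+\sigma_0)^2\,\mathrm{d}z_k$, with $J_L(z_k)=\int_{[0,L]^k}\prod_{i=1}^k g^2_{r_{i-1}-r_i}(z_{i-1},z_i)\,\mathrm{d}z_0\cdots\mathrm{d}z_{k-1}$ ($g$ the Neumann/Dirichlet heat kernel). One shows $0\le J_L(z_k)\le C^k\prod_i(4\pi\beta(r_{i-1}-r_i))^{-1/2}$ uniformly, and $J_L(z_k)\to\prod_i(4\pi\beta(r_{i-1}-r_i))^{-1/2}$ whenever $z_k$ stays a fixed fraction of $L$ away from $\{0,L\}$: by the Gaussian concentration of $\prod g^2$ the effective range of $(z_0,\dots,z_{k-1})$ lies within $O(\sqrt{kT})$ of such a bulk $z_k$, and there the method‑of‑images expansion $g_s(z,w)=p_{\beta s}(z-w)+R^L_s(z,w)$ shows $g$ agrees with the free heat kernel up to an error of order $e^{-cL^2/T}$. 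Then, writing $z_k=L\zeta$ and applying dominated convergence in $\zeta\in(0,1)$, together with $\frac1L\int_0^L(\sigma_1\mathcal{I}_0(r_k,z_k)+\sigma_0)^2\,\mathrm{d}z_k\to(\sigma_1e^{-\alpha r_k}+\sigma_0)^2$ — immediate for Neumann since there $\mathcal{I}_0\equiv e^{-\alpha r_k}$, and for Dirichlet by expanding the square and invoking the three limits of Lemma~\ref{I0lem} — one obtains
\[
\lim_{L\to\infty}H_L(r_1,\dots,r_k)=\sigma_1^{2(k-1)}e^{-2\alpha(t-r_k)}(\sigma_1e^{-\alpha r_k}+\sigma_0)^2\prod_{i=1}^k\frac1{\sqrt{4\pi\beta(r_{i-1}-r_i)}},
\]
whence an explicit finite formula for $f_k(t)$. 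I expect the main obstacle to be exactly this last point — showing that the boundary reflections of the heat kernel on $[0,L]$ are negligible for bulk arguments — which is precisely the type of estimate carried out for the stochastic heat equation in \cite{pu}.
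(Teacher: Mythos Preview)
Your proposal is correct and follows essentially the same route as the paper: the uniform bound is obtained identically (Lemma~\ref{I0lem} for $\mathcal{I}_0$, Lemma~\ref{a1}(4) for $G\le K_T\,p$, then $\int_{\mathbb R}p_{\beta a}^2=(4\pi\beta a)^{-1/2}$ and the Dirichlet integral \eqref{multigamma}), and for the limit the paper likewise sets up dominated convergence over $\Delta_k(t)$ and identifies the pointwise limit of $H_L$, simply deferring the bulk/boundary heat--kernel estimate you sketch to \cite{pu}*{Proposition~3.4} rather than outlining it. The explicit formula you reach for $f_k(t)$ coincides with the paper's \eqref{fk} after noting $p_{2\beta a}(0)=(4\pi\beta a)^{-1/2}$ and $e^{-2\alpha(t-r_k)}(\sigma_1e^{-\alpha r_k}+\sigma_0)^2=e^{-2\alpha t}(\sigma_1+\sigma_0e^{\alpha r_k})^2$.
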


\begin{proof}
    \begin{equation}\nonumber
        \begin{aligned}
            &\| \mathcal{I}_k(t,x) \|_2^2 \\
            &\leq \sigma_1^{2(k-1)} K_T^{2k} (|\sigma_1| e^{|\alpha|T} + |\sigma_0| )^2 \\
            &\quad \times \int_0^t \int_{\mathbb{R}} \dots \int_0^{r_{k-1}} \int_{\mathbb{R}} p_{\beta(t-r_1)}^2(x-z_1) \dots p_{\beta(r_{k-1} - r_k)}^2(z_{k-1} - z_k) \ \mathrm{d} r_k \mathrm{d} z_k \dots \mathrm{d} r_1 \mathrm{d} z_1 \\
            &= \sigma_1^{2(k-1)} K_T^{2k} (|\sigma_1| e^{|\alpha|T} + |\sigma_0| )^2 \left( \frac{t}{4\pi \beta} \right)^{k/2} \\
            &\quad \times \int_{0 < r_k < \dots < r_1 < 1} \sqrt{\frac{1}{(1-r_1)\times \dots \times (r_{k-1} - r_k)}} \ \mathrm{d} r_k \dots \mathrm{d} r_1 \\
            &= \sigma_1^{2(k-1)} K_T^{2k} (|\sigma_1| e^{|\alpha|T} + |\sigma_0| )^2 \left( \frac{t}{4\pi \beta} \right)^{k/2} \frac{\Gamma(1/2)^k}{\Gamma((k+2)/2)} \\
            &\leq \sigma_1^{2(k-1)} (|\sigma_1| e^{|\alpha|T} + |\sigma_0| )^2 \frac{K_T^{2k} (4\beta)^{-k/2} T^{k/2}}{\Gamma((k+2)/2)}
        \end{aligned}
    \end{equation}
    where the first equality follows from semigroup property of Gaussian heat kernel $p_t(z)$ and change of variables, and the second one holds by the following identity
    \begin{equation}\label{multigamma}
        \int_{0 < r_k < \dots < r_1 < 1} \sqrt{\frac{1}{(1-r_1)\times \dots \times (r_{k-1} - r_k)}} \ \mathrm{d} r_k \mathrm{d} z_k \dots \mathrm{d} r_1 \mathrm{d} z_1 = \frac{\Gamma(1/2)^k}{\Gamma((k+2)/2)} ,
    \end{equation}
    see~\cite{olver_lozier_boisvert_clark}*{5.14.1}.
    Then, we have \eqref{Iknd1}.

    Using the same arguments as~\cite{pu}*{Proposition 3.4.}, we obtain
    \begin{equation}\nonumber
        \begin{aligned}
            &\frac{1}{L} \int_0^L \| \mathcal{I}_k(t,x) \|_2^2 \ \mathrm{d} x \\
            &= \sigma_1^{2(k-1)} \int_{0 < r_k < \dots < r_1 < t} e^{-2\alpha(t-r_1)} p_{2\beta(t-r_1)}(0) \dots e^{-2\alpha(r_{k-1} - r_k)} p_{2\beta(r_{k-1} - r_k)}(0) \\
            &\quad \times \frac{1}{L} \int_0^L (\sigma_1 \mathcal{I}_0(r_k, z_k) + \sigma_0)^2 \ \mathrm{d} z_k \mathrm{d} r_k \dots \mathrm{d} r_1 \\
            &\quad + o(L) .
        \end{aligned}
    \end{equation}
    Therefore, using dominated convergence theorem, \eqref{Iknd1} and Lemma~\ref{I0lem}, we have
    \begin{equation}\label{fk}
        \begin{aligned}
            &\lim_{L \to \infty} \frac{1}{L} \int_0^L \| \mathcal{I}_k(t,x) \|_2^2 \ \mathrm{d} x \\
            &= \sigma_1^{2(k-1)} \int_{0 < r_k < \dots < r_1 < t} e^{-2\alpha(t-r_1)} p_{2\beta(t-r_1)}(0) \dots e^{-2\alpha(r_{k-1} - r_k)} p_{2\beta(r_{k-1} - r_k)}(0) \\
            &\quad \times \lim_{L \to \infty} \frac{1}{L} \int_0^L (\sigma_1 \mathcal{I}_0(r_k, z_k) + \sigma_0)^2 \ \mathrm{d} z_k \mathrm{d} r_k \dots \mathrm{d} r_1 \\
            &= \sigma_1^{2(k-1)} \int_{0 < r_k < \dots < r_1 < t} e^{-2\alpha(t-r_1)} p_{2\beta(t-r_1)}(0) \dots e^{-2\alpha(r_{k-1} - r_k)} p_{2\beta(r_{k-1} - r_k)}(0) \\
            &\quad \times ( \sigma_1^2 e^{-2\alpha r_k} + 2 \sigma_1 \sigma_0 e^{-\alpha r_k} + \sigma_0^2 ) \ \mathrm{d} r_k \dots \mathrm{d} r_1 \\
            &= \sigma_1^{2(k-1)} e^{-2\alpha t} \int_{0 < r_k < \dots < r_1 < t} p_{2\beta(t-r_1)}(0) \dots p_{2\beta(r_{k-1} - r_k)}(0) \\
            &\quad \times ( \sigma_1 + e^{\alpha r_k} \sigma_0 )^2 \ \mathrm{d} r_k \dots \mathrm{d} r_1 \\
            &=: f_k(t) .
        \end{aligned}
    \end{equation}
\end{proof}

\begin{proposition}
    \label{Ikp}
    In the case of periodic boundary conditions, for all $k \geq 1$ and $t > 0$, $\| \mathcal{I}_k(t,x) \|_2^2$ does not depend on $x \in [0,L]$ and
    \begin{equation}\nonumber
        \lim_{L \to \infty} \| \mathcal{I}_k(t,x) \|_2^2 = f_k(t)
    \end{equation}
    where $f_k(t)$ is the limit in .
\end{proposition}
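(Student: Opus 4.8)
The plan is to exploit translation invariance on the torus of length $L$. In the periodic case the Green's function $G_s(a,b)$ depends on $a,b$ only through $a-b$ modulo $L$, and $\mathcal{I}_0(r,z)=e^{-\alpha r}$ does not depend on $z$ (see \eqref{gid} and Lemma~\ref{I0lem}). Substituting this into the multiple-It\^o isometry expression for $\|\mathcal{I}_k(t,x)\|_2^2$, the innermost factor $(\sigma_1\mathcal{I}_0(r_k,z_k)+\sigma_0)^2$ becomes the constant $(\sigma_1 e^{-\alpha r_k}+\sigma_0)^2$, so that, writing $z_0=x$, $s_1=t-r_1$ and $s_j=r_{j-1}-r_j$ for $2\le j\le k$,
\begin{equation}\nonumber
\|\mathcal{I}_k(t,x)\|_2^2=\sigma_1^{2(k-1)}\int_{0<r_k<\dots<r_1<t}(\sigma_1 e^{-\alpha r_k}+\sigma_0)^2\,J_L(r_1,\dots,r_k)\ \mathrm{d}r_k\cdots\mathrm{d}r_1,
\end{equation}
where
\begin{equation}\nonumber
J_L(r_1,\dots,r_k)=\int_{[0,L]^k}G_{s_1}^2(z_0,z_1)\,G_{s_2}^2(z_1,z_2)\cdots G_{s_k}^2(z_{k-1},z_k)\ \mathrm{d}z_1\cdots\mathrm{d}z_k.
\end{equation}

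The key step is the identity
\begin{equation}\nonumber
\int_0^L G_s(a,z)^2\ \mathrm{d}z=e^{-2\alpha s}\sum_{\ell\in\mathbb{Z}}p_{2\beta s}(\ell L)=:q_L(s),
\end{equation}
which is independent of $a$. I would prove it by writing the periodic kernel as the periodization $G_s(a,z)=e^{-\alpha s}\sum_{n\in\mathbb{Z}}p_{\beta s}(a-z+nL)$, substituting $w=a-z$, using $L$-periodicity of the integrand to reduce to $\int_0^L$, unfolding one of the two sums to an integral over $\mathbb{R}$, and applying $\int_{\mathbb{R}}p_{\beta s}(w)p_{\beta s}(w+c)\,\mathrm{d}w=p_{2\beta s}(c)$ (equivalently, Parseval together with Poisson summation). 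Since the integrand of $J_L$ has a chain structure — the variable $z_j$ appears only in $G_{s_j}^2(z_{j-1},z_j)$ and $G_{s_{j+1}}^2(z_j,z_{j+1})$ — one may integrate out $z_k,z_{k-1},\dots,z_1$ successively, each integration contributing a factor $q_L(s_j)$ that does not depend on the remaining variables, whence $J_L(r_1,\dots,r_k)=\prod_{j=1}^k q_L(s_j)$. In particular $J_L$, and therefore $\|\mathcal{I}_k(t,x)\|_2^2$, does not depend on $x$; this is the first assertion of the proposition.

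For the limit, fix $s>0$. The $\ell\neq0$ contributions to $q_L(s)$ are controlled by $2\sum_{\ell\ge1}p_{2\beta s}(\ell L)\le \tfrac1L\int_0^\infty p_{2\beta s}(y)\,\mathrm{d}y\le 1$ for $L\ge1$ (using unimodality of $p_{2\beta s}$), so $q_L(s)\to e^{-2\alpha s}p_{2\beta s}(0)$ as $L\to\infty$, with the uniform bound $q_L(s)\le e^{2|\alpha|T}(p_{2\beta s}(0)+1)$. Hence $J_L(r_1,\dots,r_k)\to e^{-2\alpha(t-r_k)}\prod_{j=1}^k p_{2\beta s_j}(0)$ pointwise on the simplex and is dominated by $e^{2k|\alpha|T}\prod_{j=1}^k(p_{2\beta s_j}(0)+1)$, which is integrable over $\{0<r_k<\dots<r_1<t\}$: expanding the product over subsets and applying the Gamma-function identity \eqref{multigamma} to each resulting term shows finiteness, exactly as in the proof of Proposition~\ref{Iknd}. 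Dominated convergence then gives
\begin{equation}\nonumber
\lim_{L\to\infty}\|\mathcal{I}_k(t,x)\|_2^2=\sigma_1^{2(k-1)}\int_{0<r_k<\dots<r_1<t}(\sigma_1 e^{-\alpha r_k}+\sigma_0)^2\,e^{-2\alpha(t-r_k)}\prod_{j=1}^k p_{2\beta s_j}(0)\ \mathrm{d}r_k\cdots\mathrm{d}r_1,
\end{equation}
and since $(\sigma_1 e^{-\alpha r_k}+\sigma_0)^2 e^{-2\alpha(t-r_k)}=e^{-2\alpha t}(\sigma_1+\sigma_0 e^{\alpha r_k})^2$, the right-hand side coincides with $f_k(t)$ as given in \eqref{fk}. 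The main obstacle is the bookkeeping in the peeling argument and, relatedly, producing the uniform-in-$L$ dominating function; both become routine once the $a$-independence of $\int_0^L G_s(a,z)^2\,\mathrm{d}z$ is established.
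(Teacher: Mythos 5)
Your argument is correct and follows essentially the same route as the paper: you reduce $(\sigma_1\mathcal{I}_0(r_k,z_k)+\sigma_0)^2$ to the constant $(\sigma_1 e^{-\alpha r_k}+\sigma_0)^2$, peel off the spatial integrals using $\int_0^L G_s(a,z)^2\,\mathrm{d}z = G_{2s}(0,0)$ (your $q_L(s)=e^{-2\alpha s}\sum_{\ell}p_{2\beta s}(\ell L)$ is exactly this quantity, which the paper obtains via symmetry and the semigroup property rather than by direct periodization), and then pass to the limit by dominated convergence. The only difference is presentational: you supply the explicit uniform-in-$L$ dominating function that the paper leaves implicit.
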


\begin{proof}
    Since $\mathcal{I}_0(t,x) = e^{-\alpha t}$(see Lemma~\ref{I0lem} and \eqref{gid} in Lemma~\ref{a1}), we have
    \begin{equation}\nonumber
    \begin{aligned}
        &\| \mathcal{I}_k(t,x) \|_2^2 \\
        &= \sigma_1^{2(k-1)} \int_0^t \int_0^L \dots \int_0^{r_{k-1}} \int_0^L G_{t-r_1}^2(x,z_1) \dots G_{r_{k-1} - r_k}^2(z_{k-1}, z_k) \\
        &\quad \times (\sigma_1 e^{-\alpha r_k} + \sigma_0)^2 \ \mathrm{d} r_k \mathrm{d} z_k \dots \mathrm{d} r_1 \mathrm{d} z_1 \\
        &= \sigma_1^{2(k-1)} \int_{0 < r_k < \dots < r_1 < t} G_{2(t-r_1)}(0,0) \dots G_{2(r_{k-1} - r_k)}(0,0) (\sigma_1 e^{-\alpha r_k} + \sigma_0)^2 \ \mathrm{d} r_1 \dots \mathrm{d} r_k
    \end{aligned}
\end{equation}
where in the second equality follows from semigroup property of $G$.
Then, it follows that $\| \mathcal{I}_k(t,x) \|_2^2$ does not depend on $x \in [0,L]$.
Moreover, by dominated convergence theorem,
\begin{equation}\nonumber
    \begin{aligned}
        &\lim_{L \to \infty} \| \mathcal{I}_k(t,x) \|_2^2 \\
        &= \sigma_1^{2(k-1)} \int_{0 < r_k < \dots < r_1 < t} e^{-2\alpha (t-r_1)} p_{2\beta(t-r_1)}(0) \dots e^{-2\alpha (r_{k-1} - r_k)} p_{2\beta(r_{k-1} - r_k)}(0) \\
        &\quad \times (\sigma_1 e^{-\alpha r_k} + \sigma_0)^2 \ \mathrm{d} r_1 \dots \mathrm{d} r_k \\
        &= \sigma_1^{2(k-1)} e^{-2\alpha t} \int_{0 < r_k < \dots < r_1 < t} p_{2\beta(t-r_1)}(0) \dots p_{2\beta(r_{k-1} - r_k)}(0) \\
        &\quad \times (\sigma_1 + \sigma_0 e^{\alpha r_k})^2 \ \mathrm{d} r_1 \dots \mathrm{d} r_k \\
        &= f_k(t) ,
    \end{aligned}
\end{equation}
where $f_k$ is the same as in Proposition~\ref{Iknd}.
\end{proof}

By Proposition~\ref{Iknd} and~\ref{Ikp}, we can check the Assumption~\ref{assump}.
\begin{proposition}
    For every $t > 0$,
    \begin{equation}\nonumber
        \begin{aligned}
            &\lim_{L \to \infty} \frac{1}{L} \int_0^L E[\sigma(u(t,x))^2] \ \mathrm{d} x \\
            &= \sigma_1^2 \sum_{k=0}^{\infty} f_k(t) + 2 \sigma_1 \sigma_0 e^{-\alpha t} + \sigma_0^2 \\
            &=: f_{\sigma}(t) ,
        \end{aligned}
    \end{equation}
    where $\sigma(u) = \sigma_1 u + \sigma_0$, $f_0(t) := e^{-2\alpha t}$ and $f_k$ is defined in \eqref{fk}.
    Moreover, 
    \begin{equation}\nonumber
        \begin{aligned}
            f_{\sigma}(t) &\leq e^{-2\alpha t} ( |\sigma_1| + e^{|\alpha| t } |\sigma_0| )^2 (f(\sigma_1^4 t / \beta) - 1) \\
            &\quad + (\sigma_1 e^{-\alpha t} + \sigma_0)^2 ,
        \end{aligned}
    \end{equation}
    where
    \begin{equation}\label{func_f}
        f(t) := 2 e^{t/4} \int_{-\infty}^{\sqrt{t/2}} \frac{1}{\sqrt{2\pi}} e^{-y^2 / 2} \ \mathrm{d} y .
    \end{equation}
    In particular, $f_{\sigma} \in L^1([0,T])$ for all $T > 0$.
\end{proposition}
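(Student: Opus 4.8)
The plan is to feed the Wiener chaos expansion \eqref{u_chaos}--\eqref{mlito} of $u(t,x)$ into the spatial average, pass each chaos to its limit via Lemma~\ref{I0lem} and Propositions~\ref{Iknd}--\ref{Ikp}, and then recognize the resulting power series in closed form.

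Concretely, since $\sigma(u)=\sigma_1 u+\sigma_0$,
\begin{equation}\nonumber
  E[\sigma(u(t,x))^2]=\sigma_1^2\,E[u(t,x)^2]+2\sigma_1\sigma_0\,E[u(t,x)]+\sigma_0^2 .
\end{equation}
By \eqref{u_chaos} the chaoses of order $\ge 1$ are centered, so $E[u(t,x)]=\mathcal I_0(t,x)$, and by \eqref{mlito} $E[u(t,x)^2]=\sum_{k\ge 0}\|\mathcal I_k(t,x)\|_2^2$ with $\|\mathcal I_0(t,x)\|_2^2=\mathcal I_0(t,x)^2$. Dividing by $L$ and integrating over $[0,L]$, the $\sigma_0^2$-term is constant, the $\sigma_1\sigma_0$-term tends to $2\sigma_1\sigma_0 e^{-\alpha t}$ by Lemma~\ref{I0lem} (which also gives the $k=0$ contribution $\sigma_1^2 e^{-2\alpha t}=\sigma_1^2 f_0(t)$), and for each fixed $k\ge 1$ Propositions~\ref{Iknd} and~\ref{Ikp} give $\frac1L\int_0^L\|\mathcal I_k(t,x)\|_2^2\,\mathrm{d}x\to f_k(t)$. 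The one step requiring care is exchanging $\lim_{L\to\infty}$ with $\sum_{k\ge 0}$: this is justified by dominated convergence for series, because $\sup_{L\ge 1}\frac1L\int_0^L\|\mathcal I_k(t,x)\|_2^2\,\mathrm{d}x$ is dominated by a sequence of the form $c\,C^k/\Gamma((k+2)/2)$ --- from \eqref{Iknd1} in the Neumann/Dirichlet case and from the analogous heat-kernel comparison (via \eqref{gid}) in the periodic case --- which is summable since $\Gamma$ grows superexponentially. This establishes the displayed formula for $f_\sigma(t)$, and in particular that the limit exists.

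For the upper bound, rewrite the formula as $f_\sigma(t)=(\sigma_1 e^{-\alpha t}+\sigma_0)^2+\sigma_1^2\sum_{k\ge 1}f_k(t)$, so it suffices to control the tail. Starting from the representation \eqref{fk}, bounding $(\sigma_1+e^{\alpha r_k}\sigma_0)^2\le(|\sigma_1|+e^{|\alpha|t}|\sigma_0|)^2$ for $r_k\in[0,t]$, using $p_s(0)=(2\pi s)^{-1/2}$, and rescaling $r_i=ts_i$, the simplex integral collapses via the identity \eqref{multigamma} to give
\begin{equation}\nonumber
  \sigma_1^2 f_k(t)\le e^{-2\alpha t}(|\sigma_1|+e^{|\alpha|t}|\sigma_0|)^2\,\frac{(\sigma_1^4 t/(4\beta))^{k/2}}{\Gamma((k+2)/2)} .
\end{equation}
Summing over $k\ge 1$ reduces everything to identifying $\sum_{k\ge 0}s^{k/2}/\Gamma((k+2)/2)$; this is the Mittag--Leffler function $E_{1/2,1}(\sqrt s)=e^{s}\bigl(1+\operatorname{erf}(\sqrt s)\bigr)=2e^{s}\int_{-\infty}^{\sqrt{2s}}\frac{1}{\sqrt{2\pi}}e^{-y^2/2}\,\mathrm{d}y=f(4s)$ with $f$ as in \eqref{func_f}, hence $\sum_{k\ge 1}s^{k/2}/\Gamma((k+2)/2)=f(4s)-1$; taking $s=\sigma_1^4 t/(4\beta)$ yields the claimed bound.

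Finally, $f_\sigma\in L^1([0,T])$ is immediate: $f_\sigma$ is measurable (it equals the explicit formula above), and on $[0,T]$ each of $e^{-2\alpha t}$, $(|\sigma_1|+e^{|\alpha|t}|\sigma_0|)^2$, $f(\sigma_1^4 t/\beta)$ and $(\sigma_1 e^{-\alpha t}+\sigma_0)^2$ is continuous, hence bounded, so $f_\sigma$ is dominated by a bounded function on $[0,T]$. I expect the main obstacle to be the closed-form evaluation of the power series through the Mittag--Leffler/error-function identity, together with keeping the Neumann/Dirichlet and periodic cases aligned in the summability step; the intermediate estimates are routine uses of the semigroup property and \eqref{multigamma}.
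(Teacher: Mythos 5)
Your proposal is correct and follows essentially the same route as the paper: decompose $E[\sigma(u)^2]$ via the Wiener chaos expansion, pass to the limit term by term using Lemma~\ref{I0lem} and Propositions~\ref{Iknd}--\ref{Ikp} with dominated convergence for the series justified by the bound \eqref{Iknd1}, and then bound $\sigma_1^2 f_k(t)$ by $e^{-2\alpha t}(|\sigma_1|+e^{|\alpha|t}|\sigma_0|)^2 (\sigma_1^4 t/(4\beta))^{k/2}/\Gamma((k+2)/2)$ and sum. The only cosmetic difference is that you evaluate the resulting series as the Mittag--Leffler function $E_{1/2,1}$ via the error function, whereas the paper cites the identity from Chen's thesis --- these are the same identity.
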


\begin{proof}
    We first consider the Neumann/Dirichlet case.
    By proposition \eqref{mlito} and Tonelli's theorem,
    \begin{equation}\nonumber
        \frac{1}{L} \int_0^L E[u(t,x)^2] \ \mathrm{d} x = \sum_{k=0}^{\infty} \frac{1}{L} \int_0^L \| \mathcal{I}_k(t,x) \|_2^2 \ \mathrm{d} x .
    \end{equation}
    Since the series $\sum_{k=1}^{\infty} \sigma_1^{2(k-1)} (|\sigma_1| e^{|\alpha|T} + |\sigma_0| )^2 \frac{K_T^{2k} (4\beta)^{-k/2} T^{k/2}}{\Gamma((k+2)/2)}$ converges, by Proposition~\ref{Iknd}, dominated convergence theorem and Lemma~\ref{I0lem}, we have
    \begin{equation}\nonumber
        \begin{aligned}
            &\lim_{L \to \infty} \frac{1}{L} \int_0^L E[u(t,x)^2] \ \mathrm{d} x \\
            &= \sum_{k=0}^{\infty} \lim_{L \to \infty} \frac{1}{L} \int_0^L \| \mathcal{I}_k(t,x) \|_2^2 \ \mathrm{d} x \\
            &= e^{-2\alpha t} + \sum_{k=1}^{\infty} f_k(t) \\
            &= \sum_{k=0}^{\infty} f_k(t) ,
        \end{aligned}
    \end{equation}
    where $f_0(t) := e^{-2\alpha t}$.

    Similarly, in the case of periodic boundary conditions, Proposition~\ref{Ikp} and \eqref{mlito} imply that $E[u(t,x)^2]$ does not depend on $x \in [0,L]$.
    Hence, we have
    \begin{equation}\nonumber
        \begin{aligned}
            &\lim_{L \to \infty} \frac{1}{L} \int_0^L E[u(t,x)^2] \ \mathrm{d} x \\
            &= \lim_{L \to \infty} E[u(t,0)^2] \\
            &= \sum_{k=0}^{\infty} \lim_{L \to \infty} \| \mathcal{I}_k(t,0) \|_2^2 \\
            &= e^{-2\alpha t} + \sum_{k=1}^{\infty} f_k(t) \\
            &= \sum_{k=0}^{\infty} f_k(t) ,
        \end{aligned}
    \end{equation}
    where in the second equality, we apply dominated convergence theorem in order to exchange the limit and the sum.

    Therefore, in the case of Neumann, Dirichlet, or periodic boundary conditions, we obtain
    \begin{equation}\nonumber
        \lim_{L \to \infty} \frac{1}{L} \int_0^L E[u(t,x)^2] \ \mathrm{d} x = \sum_{k=0}^{\infty} f_k(t) .
    \end{equation}
    Hence, we have
    \begin{equation}\nonumber
        \begin{aligned}
            &\lim_{L \to \infty} \frac{1}{L} \int_0^L E[\sigma(u(t,x))^2] \ \mathrm{d} x \\
            &= \sigma_1^2 \lim_{L \to \infty} \frac{1}{L} \int_0^L E[u(t,x)^2] \ \mathrm{d} x \\
            &\quad + 2\sigma_1 \sigma_0 \lim_{L \to \infty} \frac{1}{L} \int_0^L E[u(t,x)] \ \mathrm{d} x + \sigma_0^2 \\
            &= \sigma_1^2 \sum_{k=0}^{\infty} f_k(t) + 2 \sigma_1 \sigma_0 e^{-\alpha t} + \sigma_0^2 ,
        \end{aligned}
    \end{equation}
    where the second equality holds by $E[u(t,x)] = \mathcal{I}_0(t,x)$ and Lemma~\ref{I0lem}.

    From \eqref{multigamma}, \eqref{fk} and change of variables, for all $k \geq 1$, we have
    \begin{equation}\nonumber
        \begin{aligned}
            \sigma_1^2 f_k(t) &\leq \sigma_1^{2k} e^{-2\alpha t} ( |\sigma_1| + e^{|\alpha| t } |\sigma_0| )^2  \\
            &\quad \times \int_{0 < r_k < \dots < r_1 < t} p_{2\beta(t-r_1)}(0) \dots p_{2\beta(r_{k-1} - r_k)}(0) \ \mathrm{d} r_k \dots \mathrm{d} r_1 \\
            &= e^{-2\alpha t} ( |\sigma_1| + e^{|\alpha| t } |\sigma_0| )^2 \frac{(\sigma_1^4 t/(4\beta))^{k/2}}{\Gamma((k+2)/2)} .
        \end{aligned}
    \end{equation}
    By using the following identity(see~\cite{chen}*{Lemma 2.3.4})
    \begin{equation}\nonumber
        \sum_{n=1}^{\infty} \frac{\lambda^{n-1}}{\Gamma((n+1)/2)} = 2 e^{\lambda^2} \int_{-\infty}^{\sqrt{2} \lambda} \frac{1}{\sqrt{2\pi}} e^{-y^2 / 2} \ \mathrm{d} y,\ \mathrm{for~all~} \lambda \geq 0
    \end{equation}
    with $\lambda = \sqrt{\sigma_1^4 t/(4 \beta)}$, we obtain
    \begin{equation}\nonumber
        \begin{aligned}
            &\sum_{k=0}^{\infty} \frac{(\sigma_1^4 t/(4\beta))^{k/2}}{\Gamma((k+2)/2)} \\
            &= \sum_{n=1}^{\infty} \frac{(\sigma_1^4 t/(4\beta))^{(n-1)/2}}{\Gamma((n+1)/2)} \\
            &= f(\sigma_1^4 t / \beta) ,
        \end{aligned}
    \end{equation}
    where $f$ is defined in \eqref{func_f}.
    Hence, we have
    \begin{equation}\nonumber
        \begin{aligned}
            &\sigma_1^2 \sum_{k=0}^{\infty} f_k(t) \\
            &= \sigma_1^2 e^{-2\alpha t} + \sum_{k=1}^{\infty} \sigma_1^2 f_k(t) \\
            &\leq \sigma_1^2 e^{-2\alpha t} + e^{-2\alpha t} ( |\sigma_1| + e^{|\alpha| t } |\sigma_0| )^2 \sum_{k=1}^{\infty} \frac{(\sigma_1^4 t/(4\beta))^{k/2}}{\Gamma((k+2)/2)} \\
            &= \sigma_1^2 e^{-2\alpha t} + e^{-2\alpha t} ( |\sigma_1| + e^{|\alpha| t } |\sigma_0| )^2 \sum_{k=0}^{\infty} \frac{(\sigma_1^4 t/(4\beta))^{k/2}}{\Gamma((k+2)/2)} \\
            &\quad - e^{-2\alpha t} ( |\sigma_1| + e^{|\alpha| t } |\sigma_0| )^2 \\
            &= \sigma_1^2 e^{-2\alpha t} + e^{-2\alpha t} ( |\sigma_1| + e^{|\alpha| t } |\sigma_0| )^2 (f(\sigma_1^4 t / \beta) - 1) .
        \end{aligned}
    \end{equation}
    Therefore, we get
    \begin{equation}\nonumber
        \begin{aligned}
            f_{\sigma}(t) &= \sigma_1^2 \sum_{k=0}^{\infty} f_k(t) + 2 \sigma_1 \sigma_0 e^{-\alpha t} + \sigma_0^2 \\
            &\leq e^{-2\alpha t} ( |\sigma_1| + e^{|\alpha| t } |\sigma_0| )^2 (f(\sigma_1^4 t / \beta) - 1) \\
            &\quad + (\sigma_1 e^{-\alpha t} + \sigma_0)^2 .
        \end{aligned}
    \end{equation}
    Then, $f_\sigma \in L^1([0,T])$ for all $T > 0$.
\end{proof}

\begin{appendices}
    
\section{Properties of  Green's function}
Denote the Gaussian heat kernel on $\mathbb{R}$ as
\begin{equation}\label{gaussian_heat_kernel}
    p_t(z) = \frac{1}{\sqrt{2\pi t}} e^{-\frac{z^2}{2t}} , \quad t > 0, z \in \mathbb{R} .
\end{equation}
Let $G$~(resp. $g$) be the Green's function for the cable equation(resp. heat equation) with Neumann/Dirichlet/periodic boundary conditions.
Note that
\begin{equation}\label{Gg}
    G_t(x,y) = e^{-\alpha t} g_{\beta t}(x,y) .
\end{equation}
Hence, the properties of $G$ follow directly from those of $g$.
For all $t > 0$ and $x, y \in [0,L]$, in the case of Neumann boundary conditions,
\begin{equation}\label{eq_guarantees4}
    G_t(x,y) = e^{-\alpha t} \sum_{n \in \mathbb{Z}} \left( p_{\beta t}(x - y + 2nL) + p_{\beta t}(x + y + 2nL) \right) ,
\end{equation}
or equivalently,
\begin{equation}\nonumber
    G_t(x,y) = \frac{e^{-\alpha t}}{\sqrt{L}} + \frac{2 e^{-\alpha t}}{L} \sum_{n=1}^{\infty} \cos \left( \frac{n \pi x}{L} \right) \cos \left( \frac{n \pi y}{L} \right) e^{-\frac{n^2 \pi^2 \beta t}{2 L^2}} ;
\end{equation}
and in the case of Dirichlet boundary conditions,
\begin{equation}\nonumber
    G_t(x,y) = e^{-\alpha t} \sum_{n \in \mathbb{Z}} \left( p_{\beta t}(x - y + 2nL) - p_{\beta t}(x + y + 2nL) \right) ,
\end{equation}
or equivalently,
\begin{equation}\label{dirichlet_rep}
    G_t(x,y) = \frac{2 e^{-\alpha t}}{L} \sum_{n=1}^{\infty} \sin \left( \frac{n \pi x}{L} \right) \sin \left( \frac{n \pi y}{L} \right) e^{-\frac{n^2 \pi^2 \beta t}{2 L^2}} ;
\end{equation}
and in the case of periodic boundary conditions,
\begin{equation}\label{periodic_rep}
    G_t(x,y) = e^{-\alpha t} \sum_{n \in \mathbb{Z}} p_{\beta t}(x - y + nL) .
\end{equation}
Moreover, by using the properties of $g$~\cite{pu}*{Lemma A.1.}, we get some useful properties of $G$.
\begin{lemma}
    \label{a1}
    \leavevmode
    \begin{itemize}
        \item[(1)] Symmetry. $G_t(x,y) = G_t(y,x)$ for all $t > 0$, $x, y \in [0,L]$.
        \item[(2)] In the case of Neumann and periodic boundary conditions, for all $t > 0$ and $x \in [0,L]$,
        \begin{equation}\label{gid}
            \int_0^L G_t(x,y) \ \mathrm{d} y = e^{-\alpha t} .
        \end{equation}
        \item[(3)] Semigroup property. For all $t, s > 0$ and $x, y \in [0,L]$,
        \begin{equation}\nonumber
            \int_0^L G_t(x,z) G_s(z,y) \ \mathrm{d} z = G_{t+s}(x,y) .
        \end{equation}
        \item[(4)] In the case of Neumann/Dirichlet boundary conditions, for every $t > 0$ and $x, y \in [0,L]$,
        \begin{equation}\nonumber
            G_t(x,y) \leq e^{-\alpha t} p_{\beta t}(x-y) \left( 4 + \frac{4}{1 - e^{-\frac{L^2}{\beta t}}} \right) ,
        \end{equation}
        and as a consequence, for all $t \in (0,T]$, $L \geq 1$ and $x, y \in [0,L]$,
        \begin{equation}\nonumber
            G_t(x,y) \leq K_T p_{\beta t}(x-y),
        \end{equation}
        where
        \begin{equation}\label{const_kt}
            K_T = e^{|\alpha| T} \left( 4 + \frac{4}{1 - e^{-\frac{1}{\beta T}}} \right) .
        \end{equation}
    \end{itemize}
\end{lemma}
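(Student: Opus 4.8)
The plan is to obtain every item of Lemma~\ref{a1} from the factorization~\eqref{Gg}, $G_t(x,y) = e^{-\alpha t} g_{\beta t}(x,y)$, combined with the corresponding properties of the heat-equation Green's function $g$ recorded in \cite{pu}*{Lemma A.1.}. The key observation is that the scalar $e^{-\alpha t}$ is independent of the spatial and integration variables and that $t\mapsto\beta t$ is a bijection of $(0,\infty)$, so each property of $g$ transfers to $G$ after pulling the exponential factor out of the relevant integral.

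Concretely, for item (1) symmetry of $g$ gives $G_t(x,y) = e^{-\alpha t}g_{\beta t}(x,y) = e^{-\alpha t}g_{\beta t}(y,x) = G_t(y,x)$. For item (2), in the Neumann and periodic cases $\int_0^L g_s(x,y)\,\mathrm{d}y = 1$, hence $\int_0^L G_t(x,y)\,\mathrm{d}y = e^{-\alpha t}\int_0^L g_{\beta t}(x,y)\,\mathrm{d}y = e^{-\alpha t}$. For item (3), the semigroup property of $g$ together with $\beta t + \beta s = \beta(t+s)$ gives $\int_0^L G_t(x,z)G_s(z,y)\,\mathrm{d}z = e^{-\alpha(t+s)}\int_0^L g_{\beta t}(x,z)g_{\beta s}(z,y)\,\mathrm{d}z = e^{-\alpha(t+s)}g_{\beta(t+s)}(x,y) = G_{t+s}(x,y)$.

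Item (4) is the only one requiring an estimate beyond substitution. Inserting the Neumann/Dirichlet bound $g_s(x,y)\le p_s(x-y)\bigl(4 + \tfrac{4}{1-e^{-L^2/s}}\bigr)$ into~\eqref{Gg} with $s=\beta t$ yields the first inequality at once. For the consequence I would restrict to $t\in(0,T]$ and $L\ge 1$ and bound the prefactor uniformly: since $\alpha\in\mathbb{R}$ and $t\le T$ one has $e^{-\alpha t}\le e^{|\alpha|t}\le e^{|\alpha|T}$, and since $L^2\ge 1$ and $t\le T$ one has $L^2/(\beta t)\ge 1/(\beta T)$, so $e^{-L^2/(\beta t)}\le e^{-1/(\beta T)}$ and therefore $\tfrac{4}{1-e^{-L^2/(\beta t)}}\le \tfrac{4}{1-e^{-1/(\beta T)}}$. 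Multiplying these bounds together produces $G_t(x,y)\le K_T\,p_{\beta t}(x-y)$ with $K_T$ as in~\eqref{const_kt}. I do not expect a genuine obstacle here: the lemma is essentially a transcription of known facts about $g$ through the scaling identity, and the only step that takes any thought is the monotonicity argument in item (4), which ultimately rests only on $L^2\ge 1$ and $t\le T$.
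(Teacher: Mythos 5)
Your proposal is correct and follows exactly the route the paper intends: the paper derives Lemma~\ref{a1} from the factorization $G_t(x,y)=e^{-\alpha t}g_{\beta t}(x,y)$ in~\eqref{Gg} together with the corresponding properties of the heat kernel $g$ from \cite{pu}*{Lemma A.1.}, and your transfer of each item (including the uniform bound in item~(4) via $e^{-\alpha t}\le e^{|\alpha|T}$ and $L^2/(\beta t)\ge 1/(\beta T)$) is precisely that argument, spelled out.
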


\end{appendices}

\newpage
\begin{flushleft}
\mbox{  }\\
\hspace{95mm} Soma Nishino\\
\hspace{95mm} Department of Mathematical Sciences\\
\hspace{95mm} Tokyo Metropolitan University\\
\hspace{95mm} Hachioji, Tokyo 192-0397\\
\hspace{95mm} Japan\\
\hspace{95mm} e-mail: nishino-soma@ed.tmu.ac.jp\\
\end{flushleft}


\begin{thebibliography}{99}
    \bibitem[ANTV22]{assaad_nualart_tudor_viitasaari} O. Assaad, D. Nualart, C. A. Tudor, L. Viitasaari: \textit{Quantitative normal approximations for the stochastic fractional heat equation}, Stoch. Partial Differ. Equ. Anal. Comput., \textbf{10}, $1$, $223$--$254$, ($2022$).
    \bibitem[BY23]{balan_yuan} R. M. Balan and W. Yuan: \textit{Central limit theorems for heat equation with timeindependent noise: the regular and rough cases}, Infin. Dimens. Anal. Quantum Probab. Relat. Top., \textbf{26}, $2$, $2250029$, $47$ pp, ($2023$).
    \bibitem[C13]{chen} L. Chen: \textit{Moments, intermittency, and growth indices for nonlinear stochastic PDE's with rough initial conditions}, PhD thesis, No. $5712$, \'Ecole Polytechnique F\'ed\'erale de Lausanne, ($2013$). $342$ pp.
    \bibitem[CKNP23]{chen_khoshnevisan_nualart_pu} L. Chen, D. Khoshnevisan, D. Nualart, F. Pu: \textit{Central limit theorems for spatial averages of the stochastic heat equation via Malliavin-Stein's method}, Stoch. Partial Differ. Equ. Anal. Comput. \textbf{11}, $1$, $122$--$176$, ($2023$).
    \bibitem[DNZ20]{delgado_nualart_zheng} F. Delgado-Vences, D. Nualart, G. Zheng: \textit{A central limit theorem for the stochastic wave equation with fractional noise}, Ann. Inst. Henri Poincar\'e Probab. Stat., \textbf{56}, $4$, $3020$--$3042$, ($2020$).
    \bibitem[E24]{ebina} M. Ebina: \textit{Central limit theorems for nonlinear stochastic wave equations in dimension three}, Stoch. Partial Differ. Equ. Anal. Comput., \textbf{12}, $2$, $1141$--$1200$, ($2024$).
    \bibitem[E25]{ebina_2} M. Ebina: \textit{Central limit theorems for stochastic wave equations in high dimensions}, Electron. J. Probab., \textbf{30}, $1$--$56$, ($2025$).
    \bibitem[GNZ21]{guerrero_nualart_zheng} R. B. Guerrero, D. Nualart, G. Zheng: \textit{Averaging 2d stochastic wave equation},  Electron. J. Probab., \textbf{26}, $1$--$32$, ($2021$).
    \bibitem[HNV20]{huang_nualart_viitasaari} J. Huang, D. Nualart, L. Viitasaari: \textit{A central limit theorem for the stochastic heat equation}, Stochastic Processes and their Applications, \textbf{130}, $12$, $7170$--$7184$, ($2020$).
    \bibitem[HNVZ20]{huang_nualart_viitasaari_zheng} J. Huang, D. Nualart, L. Viitasaari, G. Zheng: \textit{Gaussian fluctuations for the stochastic heat equation with colored noise}, Stoch. Partial Differ. Equ. Anal. Comput., \textbf{8}, $2$, $402$--$421$, ($2020$).
    \bibitem[KX93]{kallianpur_xiong} G. Kallianpur, J. Xiong: \textit{Stochastic differential equations in infinite-dimensional spaces}, IMS Lecture Notes Monogr. Ser., \textbf{26}, ($1995$).
    \bibitem[K98]{karatzas_shreve} I. Karatzas and S. E. Shreve: \textit{Brownian motion and Stochastic calculus}, Springer, Science+Business Media, Inc. in $1998$, $2$nd ed.
    \bibitem[KNP21]{khoshnevisan_nualart_pu} D. Khoshnevisan, D. Nualart, F. Pu: \textit{Spatial stationarity, ergodicity, and CLT for parabolic Anderson model with delta initial condition in dimension $d \geq 1$}, SIAM J. Math. Anal. \textbf{53} , $2$, $2084$--$2133$, ($2021$).
    \bibitem[KY22]{kim_yi} K. Kim and J. Yi: \textit{Limit theorems for time-dependent averages of nonlinear stochastic heat equations}, Bernoulli, \textbf{28}, $1$, $214$--$238$, ($2022$).
    \bibitem[LS23]{liu_shen} J. Liu and G. Shen: \textit{Gaussian fluctuation for spatial average of the stochastic pseudo-partial differential equation with fractional noise}, ALEA, Lat. Am. J. Probab. Math. Stat., \textbf{20}, $1483$--$1509$, ($2023$).
    \bibitem[NP12]{nourdin_peccati} I. Nourdin and G. Peccati: \textit{Normal approximations with Malliavin calculus. From Stein's method to universality}, Cambridge Tracts in Mathematics, $192$. Cambridge University Press, Cambridge, ($2012$). xiv+$239$ pp.
    \bibitem[N06]{nualart} D. Nualart: \textit{The Malliavin Calculus and Related Topics},  Probability and its Applications (New York). Springer-Verlag, Berlin, ($2006$). xiv+$382$ pp.
    \bibitem[NXZ22]{nualart_xia_zheng}  D. Nualart, P. Xia, G. Zheng: \textit{Quantitative central limit theorems for the parabolic Anderson model driven by colored noises}, Electron. J. Probab. \textbf{27}, $120$, $1$--$43$, ($2022$).
    \bibitem[NZ20]{nualart_zheng} D. Nualart and G. Zheng: \textit{Averaging Gaussian functionals}, Electron. J. Probab., \textbf{25}, $48$, $1$--$54$ pp, ($2020$).
    \bibitem[OLBC10]{olver_lozier_boisvert_clark} F. W. J. Olver, D. W. Lozier, R. F. Boisvert, and C. W. Clark (eds.): \textit{NIST handbook of mathematical functions}, U.S. Department of Commerce, National Institute of Standards and Technology, Washington, DC; Cambridge University Press, Cambridge, ($2010$).
    \bibitem[P22]{pu} F. Pu: \textit{Gaussian fluctuation for spatial average of parabolic Anderson model with Neumann/Dirichlet/periodic boundary conditions}, Trans. Amer. Math. Soc. \textbf{375}, $2481$--$2509$, ($2022$).
    \bibitem[SE03]{shakarchi_stein} R. Shakarchi and E. Stein: \textit{Fourier analysis: an introduction}, Princeton Lectures in Analysis, ($2003$).
    \bibitem[W81]{walsh_2} J. B. Walsh: \textit{A stochastic model of neural response}, Advances in Applied Probability, \textbf{13}($2$), $231$--$281$, ($1981$).
    \bibitem[W86]{walsh} J. B. Walsh: \textit{An introduction to stochastic partial differential equations}, École d'été de probabilités de Saint-Flour, XIV—$1984$ pp.~$265$--$439$, Lecture Notes in Mathematics, vol.~$1180$, Springer, Berlin, ($1986$).
\end{thebibliography}
\end{document}